\numberwithin{equation}{section}  
\newtheorem{theorem}{Theorem}[section]
\newtheorem{lemma}{Lemma}[section]
\newtheorem{proposition}{Proposition}[section]
\newtheorem{definition}[theorem]{Definition}
\newtheorem{corollary}{Corollary}[section]
\newenvironment{proof}[1][Proof]{\begin{trivlist}
\item[\hskip \labelsep {\bfseries #1}]}{\end{trivlist}}
\def \D {\mathbbm{D}}
\def\p{\varphi}
\begin{document}

\title {Volterrra-type Operators  from  analytic Morrey spaces to  Bloch  space\footnote{ The article was  partially supported by  Special Fund of Colleges and Universities in Fujian Province (No: JK2012010) and  Natural Science Foundation of Fujian Province (No:2009J01004), China.}
}
\author{ Zhengyuan   Zhuo \quad\quad   Shanli Ye\footnote{Corresponding author.~ E-mail address:  shanliye@fjnu.edu.cn}\\
(\small \it Department of Mathematics, Fujian Normal University, Fuzhou 350007, P. R. China)
}

\date{}
\maketitle
\begin{abstract} In this note, we study the boundedness and compactness of integral operators $I_g$
and $T_g $ from analytic Morrey spaces to Bloch space. Furthermore, the norm and essential norm of those operators are given.
   \\
{\small\bf Keywords}\quad Analytic Morrey space; Bloch space; Volterra type operator; essential norm
 \\
    {\small\bf 2010 MR Subject Classification }\quad 47B38, 30H30, 30H99 \\

\end{abstract}
\maketitle

\section{Introduction}
Let  $\D=\{z:|z|<1\}$ and $\partial\mathbbm{D}=\{z:|z|=1\}$  denote respectively the open unit disc and the unit
circle in the complex plane $\mathbbm{C}$. Let $H(\D)$ be the space of all analytic functions on
$\D$  and $dm(z)=\frac{1}{\pi}dxdy$ the normalized area Lebesgue measure.

The aim of this paper is to characterize the boundedness and compactness of two  Volterra type operators $I_g$ and $T_g$ from the analytic Morrey spaces $\mathcal{L}^{2,\lambda}$ to the classical Bloch space $B$, and from the little analytic Morrey spaces $\mathcal{L}^{2,\lambda}_0$ to the little Bloch space $B_0$.  Also, wee estimate the essential norm of $I_g$ and $T_g$.

  $Morrey$ space was initially introduced in 1938 by Morrey \cite{cbmj} to show that certain systems of partial differential equations (PDEs) had H\"{o}lder continuous solutions. In the past, $Morrey$ space has been studied heavily in different areas. For example, Adams and Xiao studied $Morrey$ spaces which is defined on Euclidean spaces $\mathbb{R}^n$ by potential theory and Hausdorff capacity in \cite{npaom,miha}. But here we will be mostly interested in  the analytic $Morrey$ spaces $\mathcal{L}^{2,\lambda}$ in the unit disk. It was introduced and studied by Wu and Xie in \cite{hwjz}.

  For an arc $I \subset \partial\mathbb{D}$, let $|I|=\frac{1}{2\pi}\int_I |d\zeta|$ be the normalized arc length of $I$, $$f_I =\frac{1}{|I|}\int_I f(\zeta)\frac{|d\zeta|}{2\pi}, f\in H(\D),$$
and $S(I)$ be the Carleson box based on $I$ with $$S(I)=\{z\in\D:1-|I|\leq|z|<1, \frac{z}{|z|}\in I\}.$$ Cleatly, if $I=\partial\mathbb{D}$, then $S(I)=\mathbb{D}$.

Denote  $\mathcal{L}^{2,\lambda}(\D)$ the analytic $Morrey$ spaces of all analytic functions $f\in H^2$ on $\mathbb{D}$ such that
$$\sup_{I \subset \partial\mathbbm{D}} \big(\frac{1}{|I|^\lambda} \int_I |f(\zeta)-f_I|^2\frac{|d\zeta|}{2\pi}
\big)^{1/2} <\infty,$$
where $0<\lambda\leq 1$ and the Hardy space $H^2$ consists of analytic functions f in $\mathbb{D}$ satisfying
$$
\sup_{0<r<1}\frac{1}{2\pi}\int_0^{2\pi}|f(re^{i\theta})|^2\,d\theta<\infty.
$$

Similarly to the relation between $BMOA$ space and $VMOA$ space, we have that $f\in \mathcal{L}^{2,\lambda}_0(\D)$, the little $Morrey$ spaces, if $f\in \mathcal{L}^{2,\lambda}(\D)$ and
$$\lim_{|I| \rightarrow 0} \big(\frac{1}{|I|^\lambda} \int_I |f(\zeta)-f_I|^2\frac{|d\zeta|}{2\pi}
\big)^{1/2} =0.$$

Xiao and Xu \cite{cobacs} studied the composition operators of $\mathcal{L}^{2,\lambda}$ spaces. Cascante, F\`{a}brega and Ortega \cite{tctiwh} studied the Corona theorem of $\mathcal{L}^{2,\lambda}$. It is a useful tools for the study of harmonic analysis and partial differential equations, We refer the readers to \cite{cbmj,jp,ctz}.

The following lemma gives some equivalent conditions of $\mathcal{L}^{2,\lambda}(\D)$ (see Theorem 3.21 of \cite{xia1} or Theorem 3.1 of \cite{wul}).
\begin{lemma}
 Suppose that $0<\lambda<1$ and $f\in H(\D)$. Let $a\in \D$, $\p_a (z)=\frac{a-z}{1-\overline{a}z}$. Then the
following statements are equivalent.

$(i)~f\in \mathcal{L}^{2,\lambda}(\D)$;

$(ii)~\displaystyle\sup_{I \subset \partial\mathbbm{D}} \frac{1}{|I|^\lambda} \int_{S(I)} |f'(z)|^2(1-|z|^2)dm(z) <\infty$;

$(iii)~\displaystyle\sup_{a\in \D} (1-|a|^2)^{1-\lambda}\int_{\D} |f'(z)|^2(1-|\p_a (z)|^2)dm(z) <\infty$.

\end{lemma}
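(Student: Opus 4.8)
The plan is to prove the two equivalences (ii)$\Leftrightarrow$(iii) and (i)$\Leftrightarrow$(iii) separately: the first is a purely geometric computation using the explicit form of the Möbius factor, while the second rests on a Littlewood--Paley identity. Throughout I would use the elementary identity $1-|\p_a(z)|^2=\frac{(1-|a|^2)(1-|z|^2)}{|1-\overline{a}z|^2}$ together with the fact that $|1-\overline{a}\zeta|=|\zeta-a|$ for $\zeta\in\partial\D$, so that $\frac{1-|a|^2}{|1-\overline{a}\zeta|^2}$ is exactly the Poisson kernel $P(a,\zeta)$.

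For (iii)$\Rightarrow$(ii): given an arc $I$ with centre $\zeta_I$, I would take $a=(1-|I|)\zeta_I$, so that $1-|a|^2\asymp|I|$ and $|1-\overline{a}z|\asymp|I|$ for every $z\in S(I)$; then $1-|\p_a(z)|^2\asymp(1-|z|^2)/|I|$ on $S(I)$, and restricting the integral in (iii) to $S(I)$ gives $\frac{1}{|I|^\lambda}\int_{S(I)}|f'|^2(1-|z|^2)\,dm\lesssim$ the supremum in (iii). For the converse (ii)$\Rightarrow$(iii): fix $a$, let $I_0$ be the arc of length $1-|a|$ centred at $a/|a|$ and $I_k$ the concentric arc of length $2^k(1-|a|)$ (stopping once it exhausts $\partial\D$), and decompose $\D$ into $S(I_0)$ and the shells $S(I_k)\setminus S(I_{k-1})$. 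On the $k$-th shell $|1-\overline{a}z|\asymp 2^k(1-|a|)$, whence $1-|\p_a(z)|^2\lesssim 2^{-2k}(1-|z|^2)/(1-|a|)$; bounding $\int_{S(I_k)}|f'|^2(1-|z|^2)\,dm\le M|I_k|^\lambda$ by (ii) (with $M$ the supremum in (ii)) yields a $k$-th contribution $\lesssim M\,2^{-k(2-\lambda)}$, and summing this geometric series, which converges because $2-\lambda>0$, gives (iii).

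The substantive half is (i)$\Leftrightarrow$(iii), for which I would introduce the invariant Garsia-type quantity $G(f)=\sup_{a}(1-|a|^2)^{1-\lambda}\int_{\partial\D}|f(\zeta)-f(a)|^2P(a,\zeta)\frac{|d\zeta|}{2\pi}$. Using the conformal invariance of harmonic measure, the substitution $\zeta=\p_a(\eta)$ with $g=f\circ\p_a$ turns the inner integral into $\|g-g(0)\|_{H^2}^2$, and the classical $H^2$ identity $\|g-g(0)\|_{H^2}^2=2\int_\D|g'(w)|^2\log\frac1{|w|}\,dm(w)$ followed by the change of variables $w=\p_a(z)$ produces
$$\int_{\partial\D}|f(\zeta)-f(a)|^2P(a,\zeta)\frac{|d\zeta|}{2\pi}=2\int_\D|f'(z)|^2\log\frac{1}{|\p_a(z)|}\,dm(z).$$
A routine two-sided comparison $\log\frac1{|w|}\asymp 1-|w|^2$ (the logarithmic singularity at $w=0$ being absorbed using the subharmonicity of $|g'|^2$) then gives $G(f)\asymp$ the supremum in (iii). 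To relate $G(f)$ to (i), for the bound of the quantity in (i) by $G(f)$ I would again take $a=(1-|I|)\zeta_I$; since $P(a,\cdot)\gtrsim|I|^{-1}$ on $I$, since $\int_I|f-f_I|^2\le\int_I|f-f(a)|^2$, and since $(1-|a|^2)^{1-\lambda}\asymp|I|^{1-\lambda}$, the Poisson-weighted estimate converts into $\frac{1}{|I|^\lambda}\int_I|f-f_I|^2\frac{|d\zeta|}{2\pi}\lesssim G(f)$.

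The reverse estimate, bounding $G(f)$ by the quantity $N$ in (i), is where I expect the real work. Fixing $a$ and splitting $\partial\D$ into the dyadic arcs $I_k$ as above, on $I_k\setminus I_{k-1}$ one has $P(a,\zeta)\asymp 2^{-2k}(1-|a|)^{-1}$, so $\int_{\partial\D}|f-f(a)|^2P(a,\cdot)\frac{|d\zeta|}{2\pi}\lesssim\sum_k 2^{-2k}(1-|a|)^{-1}\int_{I_k}|f-f(a)|^2\frac{|d\zeta|}{2\pi}$; one must then control $\int_{I_k}|f-f(a)|^2$ by inserting the averages $f_{I_k}$ and telescoping the differences $f_{I_k}-f(a)$ across scales, using $|f_{I_k}-f_{I_{k-1}}|\lesssim N^{1/2}|I_k|^{(\lambda-1)/2}$. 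The main obstacle is precisely this telescoping and the verification that, after multiplying by $(1-|a|^2)^{1-\lambda}$, the resulting series converges to a constant multiple of $N$ uniformly in $a$; this is exactly the step in which the hypothesis $0<\lambda<1$ is used, since the convergence of the telescoped sum requires $\lambda-1<0$.
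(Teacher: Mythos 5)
There is no internal argument to compare yours against: the paper never proves this lemma, but imports it with a citation to Theorem 3.21 of \cite{xia1} and Theorem 3.1 of \cite{wul}. Your proposal therefore supplies a self-contained proof where the paper offers only a pointer, and the route you take --- the identity $1-|\p_a(z)|^2=\frac{(1-|a|^2)(1-|z|^2)}{|1-\overline{a}z|^2}$ with test points $a=(1-|I|)\zeta_I$ and dyadic shells for (ii)$\Leftrightarrow$(iii), then a Garsia-type quantity $G(f)$ tied to (iii) by the Littlewood--Paley identity and to (i) by telescoping dyadic averages --- is in substance the standard argument of the cited references. The quantitative skeleton closes correctly: in (ii)$\Rightarrow$(iii) the $k$-th shell contributes $\lesssim M2^{-k(2-\lambda)}$, summable because $2-\lambda>0$; in the hard direction $G(f)\lesssim N$, the estimate $|f_{I_k}-f_{I_{k-1}}|\lesssim N^{1/2}|I_k|^{(\lambda-1)/2}$ makes the telescoped sum $\sum_{j\le k}|I_j|^{(\lambda-1)/2}\lesssim(1-|a|)^{(\lambda-1)/2}$ bounded uniformly in $k$ exactly because $\lambda<1$, which leads to $\int_{I_k}|f-f_{I_0}|^2\frac{|d\zeta|}{2\pi}\lesssim N\bigl(|I_k|^{\lambda}+2^{k}(1-|a|)^{\lambda}\bigr)$ and hence, after summing against $2^{-2k}(1-|a|)^{-1}$, to $N(1-|a|)^{\lambda-1}$, which is precisely what the prefactor $(1-|a|^2)^{1-\lambda}$ absorbs.

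Three details you should make explicit in a full write-up; none threatens the approach. (a) By the paper's definition, (i) includes the requirement $f\in H^2$; in (iii)$\Rightarrow$(i) this must be checked (it follows from the $a=0$ case of (iii) and the same Littlewood--Paley identity), and conversely $f\in H^2$ is what legitimizes the boundary integrals and the Poisson representation $f(a)=\int_{\partial\D}f(\zeta)P(a,\zeta)\frac{|d\zeta|}{2\pi}$; otherwise apply the identity to the dilations $f_r$ and let $r\to1$. (b) Your telescoping chain ends at $f_{I_0}$, not at $f(a)$: the link $|f_{I_0}-f(a)|$ is not a difference of consecutive averages, but it is bounded by $\bigl(\int_{\partial\D}|f-f_{I_0}|^2P(a,\zeta)\frac{|d\zeta|}{2\pi}\bigr)^{1/2}$ via the Poisson representation, so it is absorbed into the main term; alternatively, run the whole estimate with $f_{I_0}$ in place of $f(a)$, using that $f_I$ minimizes $c\mapsto\int_I|f-c|^2$. (c) The comparison $\log\frac{1}{|w|}\approx1-|w|^2$ fails near $w=0$; your subharmonicity fix is correct, but it should be performed in the coordinates $g=f\circ\p_a$ so that the constants are uniform in $a$.
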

 From the lemma above, we can define the norm of function
$f\in \mathcal{L}^{2,\lambda}(\D)$ and equivalent formula as follows
\begin{eqnarray*}
\|f\|_{\mathcal{L}^{2,\lambda}}&=&|f(0)|+\sup_{I \subset \partial\mathbbm{D}} \big(\frac{1}{|I|^\lambda} \int_{S(I)} |f'(z)|^2(1-|z|^2)dm(z)\big)^{1/2}\\
&\approx& |f(0)|+\sup_{a\in \D} \big((1-|a|^2)^{1-\lambda}\int_{\D} |f'(z)|^2(1-|\p_a (z)|^2)dm(z)\big)^{1/2}.
\end{eqnarray*}
 It is  known that $\mathcal{L}^{2,1}(\D)=BMOA$ and if $0<\lambda<1$, $BMOA\subsetneq\mathcal{L}^{2,\lambda}(\D)$. For more information on $BMOA$ and $VMOA$, see \cite{gir}.

A function ~$f$ analytic on the unit disk is said to belong to the ~$Bloch$ space ~$B$ if
$$
     \|f\|_B =\sup_{z\in\mathbb{D} } \{(1-|z|^2)|f'(z)|  \}< \infty,
$$
and to the little ~$Bloch$ space ~$B_0$ if  $f\in B$ and
$$\lim_{|z|\to 1^-}(1-|z|^2)|f'(z)|= 0.$$

It is well known  that $B$ is a Banach space under the norm
$\||f\||_B=|f(0)| + \|f\|_B$ and ~$ B_0$ is a closed subspace of ~$B$. See ${\cite{bf}}$. By \cite{arlp,jx}, together with Lemma 2.1 in \cite{radsjx}, we have the following equivalent statements about the norm of $f\in B $.
\begin{proposition}For all $p\in(1,\infty)$,
\begin{eqnarray*}
\label{mt2}
\||f\||_B  &\approx& |f(0)|+\sup_{a\in \D} \big(\int_{\D} |f'(z)|^2(1-|\p_a (z)|^2)^p dm(z)\big)^{1/2}\\
&\approx&|f(0)|+\sup_{I \subset \partial\mathbbm{D}} \big(\frac{1}{|I|^p} \int_{S(I)} |f'(z)|^2(1-|z|^2)^{p}dm(z)\big)^{1/2}.
\end{eqnarray*}
\end{proposition}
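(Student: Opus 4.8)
The plan is to establish the two equivalences separately: first $\|f\|_B \approx \sup_{a\in\D}\bigl(\int_{\D} |f'(z)|^2(1-|\p_a(z)|^2)^p\,dm(z)\bigr)^{1/2}=:Q_p(f)$, and then $Q_p(f)\approx \sup_{I}\bigl(|I|^{-p}\int_{S(I)}|f'(z)|^2(1-|z|^2)^p\,dm(z)\bigr)^{1/2}=:R_p(f)$. Adding the common term $|f(0)|$ to all three quantities then yields the stated chain of equivalences. Throughout I would use the identity $1-|\p_a(z)|^2=\frac{(1-|a|^2)(1-|z|^2)}{|1-\overline{a}z|^2}$.

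For $\|f\|_B\lesssim Q_p(f)$ I would exploit that $|f'|^2$ is subharmonic. Fixing $a\in\D$ and applying the sub-mean-value inequality over the Euclidean disc $D(a,\tfrac12(1-|a|))\subset\D$, on which both $1-|z|^2$ and $|1-\overline{a}z|$ are comparable to $1-|a|$ and hence $1-|\p_a(z)|^2\approx 1$, gives $(1-|a|^2)^2|f'(a)|^2\lesssim \int_{\D}|f'(z)|^2(1-|\p_a(z)|^2)^p\,dm(z)\le Q_p(f)^2$, so that $\|f\|_B\lesssim Q_p(f)$. For the reverse bound $Q_p(f)\lesssim\|f\|_B$ I would insert the pointwise estimate $|f'(z)|^2\le\|f\|_B^2(1-|z|^2)^{-2}$ and reduce matters to the uniform boundedness of $(1-|a|^2)^p\int_{\D}(1-|z|^2)^{p-2}|1-\overline{a}z|^{-2p}\,dm(z)$. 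This is precisely the classical integral estimate $\int_{\D}(1-|z|^2)^{t}|1-\overline{a}z|^{-(2+t+c)}\,dm(z)\approx(1-|a|^2)^{-c}$, valid for $c>0$ and $t>-1$, applied with $t=p-2$ and $c=p$. The hypothesis $p>1$ enters exactly at this point, guaranteeing $t=p-2>-1$ and hence the convergence of the integral near $\partial\D$.

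For $R_p(f)\lesssim Q_p(f)$, given an arc $I$ I would test the supremum defining $Q_p(f)$ at the point $a=(1-|I|)\zeta_I$, with $\zeta_I$ the center of $I$: on $S(I)$ one has $1-|a|^2\approx|I|$ and $|1-\overline{a}z|\lesssim|I|$, whence $(1-|\p_a(z)|^2)^p\gtrsim (1-|z|^2)^p/|I|^p$, and restricting the defining integral of $Q_p(f)$ to $S(I)$ immediately bounds $|I|^{-p}\int_{S(I)}|f'(z)|^2(1-|z|^2)^p\,dm(z)$ by $Q_p(f)^2$.

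The principal obstacle, which I expect to require the most work, is the remaining direction $Q_p(f)\lesssim R_p(f)$. Writing $d\mu=|f'(z)|^2(1-|z|^2)^p\,dm(z)$, the condition $R_p(f)<\infty$ reads $\mu(S(I))\lesssim|I|^p R_p(f)^2$, while $Q_p(f)^2=\sup_a\int_{\D}(1-|a|^2)^p|1-\overline{a}z|^{-2p}\,d\mu(z)$. I would fix $a$, set $\zeta=a/|a|$ and $\delta=1-|a|$, and decompose $\D$ into the rings $R_n=S(I_n)\setminus S(I_{n-1})$, where $I_n$ is the arc centered at $\zeta$ with $|I_n|=\min(2^n\delta,1)$. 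The geometric core is the estimate $|1-\overline{a}z|\gtrsim 2^n\delta$ for $z\in R_n$, since leaving $S(I_{n-1})$ forces either $1-|z|$ or the angular distance to $\zeta$ to exceed $\approx 2^{n-1}\delta$; combined with $\mu(R_n)\le\mu(S(I_n))\lesssim(2^n\delta)^p R_p(f)^2$ this bounds the integral over $R_n$ by $2^{-np}R_p(f)^2$, and summing the geometric series over $n\ge0$ (convergent for every $p>0$) gives $Q_p(f)^2\lesssim R_p(f)^2$. Care is needed to verify this distance estimate uniformly and to confirm that the rings exhaust $\D$ after roughly $\log(1/\delta)$ steps, but no input beyond the Carleson-box bound and the geometry of $|1-\overline{a}z|$ is required.
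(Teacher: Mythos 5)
Your proposal is correct, but it takes a genuinely different route from the paper: the paper gives no proof at all, deducing the proposition directly from known results --- the theorem of Aulaskari--Lappan and Xiao (\cite{arlp,jx}) that the Bloch space coincides with $Q_p$ for every $p>1$, which is exactly your first equivalence, and Lemma 2.1 of Aulaskari--Stegenga--Xiao (\cite{radsjx}), which identifies the M\"obius-invariant quantity with the Carleson-box quantity, which is your second equivalence. What you have written out is, in essence, the standard proofs of those two cited facts: subharmonicity of $|f'|^2$ on the disc $D(a,\tfrac12(1-|a|))$ plus the Forelli--Rudin integral estimate for $\|f\|_B\approx Q_p(f)$ (and your bookkeeping correctly isolates $p>1$ as exactly the condition $t=p-2>-1$ needed for convergence), and the dyadic Carleson-ring decomposition with the geometric bound $|1-\overline{a}z|\gtrsim 2^{n}(1-|a|)$ off $S(I_{n-1})$ for the equivalence $Q_p(f)\approx R_p(f)$ (which, as you note, needs only $p>0$ in that direction). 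The citation route is shorter and is all the paper needs; your route is self-contained, makes visible exactly where each hypothesis enters, and verifies the proposition without consulting the references, at the cost of rederiving classical material.
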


Suppose that $g:\D\to \mathbb{C}$  is a holomorphic map. The integral operator $T_g$, called  Volterra-type operator, is defined as $$T_g f(z)=\int_0^z f(w)g'(w)dw, ~~~z\in\D, ~~f\in H(\D).$$

In \cite{chp} Pommerenke introduced the operator $T_g$ and showed that $T_g$ is a bounded operator on the Hardy space $H^2$ if and only if $g \in BMOA$.

The companion integral operator $I_g$ is spontaneously defined as $$I_g f(z)=\int_0^z f'(w)g(w)dw,~~~z\in\D, ~~f\in H(\D).$$

The boundedness, compactness or essential norm of $T_g$ and $I_g$ between spaces of analytic functions were investigated by many authors.
Aleman and Siskakis in \cite{aaags} studied the integral operator $T_g$ on the Bergman space, and then Aleman considered with Cima $T_g$ acting on the Hardy space in \cite{aajac}. Siskakis and Zhao \cite{agsrz} also investigated $T_g$ on the space $BMOA$. $T_g$ on the $Q_p$ space was studied by Xiao in \cite{jxqcm} . Li and Stevi\'{c} in \cite{slss} studied the boundedness and compactness of $T_g$ and $I_g$ on the $Zygmund$ Spaces and the little $Zygmund$ spaces. Cinstantin in \cite{oc} considered the boundedness and compactness of $T_g$ on $Fock$ spaces. Ye in \cite{povo} studied products of Volterra-type operators and composition operators on logarithmic $Bloch$ space. Ye and Gao in \cite{ysgj} gave the boundedness and compactness of $T_g$ between different weighted $Bloch$ spaces. 

 There are some articles about the integral operator acting on $Morrey$ space. For example, Wu in \cite{zw} considered $T_g$ from Hardy to analytic $Morrey$ spaces, Li, Liu and  Lou \cite{pljlzl} characterized the boundedness and essential norm of $T_g$ and $I_g$ on analytic $Morrey$ spaces (see also the related references therein).

Now, we need two spaces. Let $\alpha >-1$. Recall that $f\in H(\D)$ belongs to the weighted  space $H_\alpha^\infty$ if it satisfies with
$$\sup_{z\in \D} (1-|z|^2)^\alpha |f(z)|<\infty.$$
When $\alpha >-1$, $H_\alpha^\infty$ endowed with the norm $\|f\|_\alpha=\sup_{z\in \D} (1-|z|^2)^\alpha |f(z)|$ is a Banach space. This space is connected with the study of growth conditions of analytic functions and was also studied in detail , see \cite{bbt,kabjbag,sw1,sw2}.
The space $H_\alpha^\infty$ is used in the  characterizations of the boundedness and essential norm of $I_g$. Then we conclude the boundedness and essential norm of $T_g$ by introducing the following $Bloch-Morrey$ type space.

\begin{definition}
Let $0<\lambda\leq1$ and $p> 1$. The $Bloch-Morrey$ type space $B\mathcal{L}^{p,\lambda}$ is the set of all $g\in H(\D)$ such that
$$M(g)=\sup_{I \subset \partial\mathbbm{D}} \big(\frac{1}{|I|^{p-\lambda+1}} \int_{S(I)} |g'(z)|^2(1-|z|^2)^{p}dm(z)\big)^{1/2}<\infty.$$
The corresponding subspace $B\mathcal{L}^{p,\lambda}_0$, the little $Bloch-Morrey$ type space, can be defined as
$$B\mathcal{L}^{p,\lambda}_0=\{g\in B\mathcal{L}^{p,\lambda},~~\lim_{|I| \rightarrow 0} \big(\frac{1}{|I|^{p-\lambda+1}} \int_{S(I)} |g'(z)|^2(1-|z|^2)^{p}dm(z)\big)^{1/2}=0\}.$$
\end{definition}

It is easy to prove that $B\mathcal{L}^{p,\lambda}$ is a Banach space under the norm$$\|g\|_{B\mathcal{L}^{p,\lambda}}=|g(0)|+M(g).$$
Clearly, $B\mathcal{L}^{p,1}=Bloch$. From \cite{radsjx}, we know that $\|g\|_{B\mathcal{L}^{p,\lambda}}$ is comparable with the norm
$$ |g(0)|+\sup_{a\in \mathbbm{D}} \big( \int_{\D}(\frac{1-|a|^2}{|1-\overline{a}z|^2})^{p+1-\lambda} |g'(z)|^2(1-|z|^2)^{p}dm(z)\big)^{1/2}.$$

$Notations$: For two functions $F$ and $G$, if there is a constant $C > 0$ dependent only on indexes $p,\lambda...$  such that $F\leq CG$, then we say that $F\lesssim G$. Furthermore, denote that $F\thickapprox G$ ($F$ is comparable with $G$) whenever $F\lesssim G \lesssim F$.

\section{$\mathcal{L}^{2,\lambda}$ vs $B$}

Evidently, when $0<\lambda<1$, $BMOA\subsetneq\mathcal{L}^{2,\lambda}(\D)$. On the other hand $BMOA\subsetneq B$. Does $\mathcal{L}^{2,\lambda}$ and $B$ have the inclusion relation ? We claim the answer is negative by the following two proposition. This makes our job more signality.
\begin{proposition}
$\mathcal{L}^{2,\lambda}\nsubseteq B.$
\end{proposition}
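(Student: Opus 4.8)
The plan is to exhibit an explicit analytic function that lies in $\mathcal{L}^{2,\lambda}$ but fails to be in the Bloch space $B$. The natural strategy is to construct a function $f$ whose derivative $f'$ grows fast enough near a single boundary point (say $z=1$) that $\sup_{z\in\D}(1-|z|^2)|f'(z)|=\infty$, while the integral/Carleson-box condition in Lemma~1.1(ii), namely $\sup_{I}\frac{1}{|I|^\lambda}\int_{S(I)}|f'(z)|^2(1-|z|^2)\,dm(z)<\infty$, still holds because the weight $(1-|z|^2)$ and the normalization by $|I|^\lambda$ tame the local blow-up when it is sufficiently mild. Since $0<\lambda<1$, the Morrey condition is strictly weaker than the $BMOA$ condition, so there is genuine room between membership in $\mathcal{L}^{2,\lambda}$ and the pointwise Bloch bound.

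First I would try a function of the form $f(z)=(1-z)^{-\beta}$ (or a logarithmic variant $\log\frac{1}{1-z}$ raised to a power, or a lacunary series), choosing the exponent $\beta>0$ as small as possible. For such $f$ one has $f'(z)=\beta(1-z)^{-\beta-1}$, so along the radius $(1-|z|^2)|f'(z)|\approx (1-r)^{1-(\beta+1)}=(1-r)^{-\beta}\to\infty$, which immediately shows $f\notin B$. The remaining work is to verify the Morrey-type square condition. Using the standard estimate $|1-z|\gtrsim 1-|z|$ and the geometry of the Carleson box $S(I)$, I would reduce $\int_{S(I)}|f'(z)|^2(1-|z|^2)\,dm(z)$ to an integral dominated by $\int_{S(I)}(1-|z|)^{-2\beta-2}(1-|z|)\,dm(z)$, which for a box of side $|I|=h$ based at $z=1$ behaves like $\int_0^h\!\!\int_{|t|<h}(s)^{-2\beta-1}\,dt\,ds$ up to constants; carrying out the radial integration gives a bound of order $h^{1-2\beta}$ provided $2\beta<1$. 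Comparing with the required normalization $|I|^\lambda=h^\lambda$, membership in $\mathcal{L}^{2,\lambda}$ needs $1-2\beta\geq\lambda$, i.e. $\beta\leq\frac{1-\lambda}{2}$. Thus any $\beta$ with $0<\beta\leq\frac{1-\lambda}{2}$ should produce a function in $\mathcal{L}^{2,\lambda}\setminus B$, which is exactly the regime that opens up because $\lambda<1$.

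The main obstacle I anticipate is the careful verification that the chosen $f$ really satisfies the Morrey condition uniformly over all arcs $I$, not merely those centered at the singular point $z=1$. For arcs far from $1$ the integrand $|f'|^2(1-|z|^2)$ is bounded and the contribution is harmless, but one must handle the transition regime and confirm that the supremum over arcs of all sizes and positions is finite; the worst case is the family of boxes shrinking to $z=1$, so the scaling computation above is the heart of the matter. A clean way to discharge this uniformity is to invoke the equivalent Berezin-type characterization in Lemma~1.1(iii), $\sup_{a\in\D}(1-|a|^2)^{1-\lambda}\int_\D|f'(z)|^2(1-|\p_a(z)|^2)\,dm(z)<\infty$, and estimate the single integral against a hypergeometric-type bound, reducing the whole claim to one exponent inequality rather than a family of box estimates. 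With the exponent $\beta$ tuned to the borderline value $\frac{1-\lambda}{2}$, both the divergence of the Bloch seminorm and the finiteness of the Morrey norm follow from elementary integral asymptotics, completing the separation $\mathcal{L}^{2,\lambda}\nsubseteq B$.
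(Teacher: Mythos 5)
Your overall strategy is viable and runs parallel to the paper's: both arguments exhibit an explicit function whose derivative blows up radially at $z=1$ (hence not Bloch) and then verify the Morrey condition by direct integration. The paper takes $g(z)=(\log\frac{1}{1-z})^2$ and checks condition (iii) of Lemma 1.1, while you take the power function $f(z)=(1-z)^{-\beta}$ with $0<\beta\le\frac{1-\lambda}{2}$, which does indeed lie in $\mathcal{L}^{2,\lambda}\setminus B$. However, the central computation as you wrote it fails. After invoking $|1-z|\gtrsim 1-|z|$, you dominate the box integral by $\int_{S(I)}(1-|z|)^{-2\beta-2}(1-|z|)\,dm(z)$, i.e.\ by $\int_0^h\int_{|t|<h}s^{-2\beta-1}\,dt\,ds \approx h\int_0^h s^{-2\beta-1}\,ds$, and claim this is of order $h^{1-2\beta}$. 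But the exponent $-2\beta-1$ is strictly less than $-1$ for every $\beta>0$, so $\int_0^h s^{-2\beta-1}\,ds=+\infty$: your dominating integral is divergent, and nothing can be concluded from it. The error is that the crude bound $|1-z|\gtrsim 1-|z|$ discards the angular decay of the kernel, and that decay is exactly what makes the integral finite. Since you yourself identify this scaling computation as ``the heart of the matter,'' this is a genuine gap, not a cosmetic one.

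The repair is standard but must actually be carried out: for $z=re^{i\theta}\in S(I)$ with $I$ centered at $1$ and $|I|=h$, use the two-variable comparison $|1-z|\approx (1-r)+|\theta|$ and integrate in $\theta$ first,
\[
\int_{|\theta|<Ch}\bigl((1-r)+|\theta|\bigr)^{-2\beta-2}\,d\theta \;\lesssim\; (1-r)^{-2\beta-1},
\]
and only then integrate radially against the weight $(1-r)$: $\int_0^h s\cdot s^{-2\beta-1}\,ds=\int_0^h s^{-2\beta}\,ds\approx h^{1-2\beta}$, which requires $2\beta<1$ and, after division by $h^\lambda$, is bounded precisely when $\beta\le\frac{1-\lambda}{2}$. (The same mechanism powers the paper's proof: there the angular integration $\int_0^{2\pi}|1-re^{i\theta}|^{-2}\,d\theta\approx(1-r)^{-1}$ is what produces the integrable radial factor $(1-r)^{-\lambda}\log^2\frac{1}{1-r}$.) One further caution: your fallback via Lemma 1.1(iii), reduced to a single $a$-free integral, only handles $\beta<\frac{1-\lambda}{2}$ strictly, since by the Forelli--Rudin estimate that integral behaves like $\int_0^1(1-r)^{-\lambda-2\beta}\,dr$, which diverges at the borderline $\beta=\frac{1-\lambda}{2}$; so either take $\beta$ strictly below the borderline or run the corrected Carleson-box computation above.
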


\begin{proof}
Considering $g(z)=(\log \frac{1}{1-z})^2$, which is obviously not a $Bloch$ function. We claim that $g(z)\in \mathcal{L}^{2,\lambda}$. Indeed,
\begin{eqnarray*}
\lefteqn{  \sup_{a\in \D} (1-|a|^2)^{1-\lambda}\int_{\D} |g'(z)|^2(1-|\rho_a (z)|^2)dm(z)} \hspace{30pt}\\
&\lesssim &\sup_{a\in \D} (1-|a|^2)^{1-\lambda}\int_{\D} |\frac{1}{1-z}|^2\log^2\frac{1}{1-|z|}(1-|\rho_a (z)|^2)dm(z)\\
&\lesssim & \int_{\D} |\frac{1}{1-z}|^2 (1-|z|^2)^{1-\lambda} \log^2\frac{1}{1-|z|}dm(z)\\
&= & \int_{0}^1 \int_{0}^{2\pi} |\frac{1}{1-re^{i\theta}}|^2 d\theta (1-r^2)^{1-\lambda}\log^2\frac{1}{1-r}dr\\
&= & \int_{0}^1  (1-r^2)^{-\lambda}\log^2\frac{1}{1-r}dr < \infty.
\end{eqnarray*}

This finish the proof.
\end{proof}

Conversely, the function $\displaystyle f(z)=\sum_{n=0}^\infty z^{2^n}$ is a $Bloch$ function (see \cite{obf}) and it is well known that it has a radial limit almost nowhere. Consequently, $f$ does not belong to any of the $Hardy$ spaces and so $f \notin \mathcal{L}^{2,\lambda}$. So we have the following proposition.
\begin{proposition}
$B\nsubseteq \mathcal{L}^{2,\lambda}.$
\end{proposition}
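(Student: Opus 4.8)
The plan is to produce a single explicit function witnessing the non-inclusion, namely the lacunary series $f(z)=\sum_{n=0}^\infty z^{2^n}$ already named in the text. Two facts must be assembled: that $f$ lies in $B$, and that $f$ fails to lie in $\mathcal{L}^{2,\lambda}$. The second is where the definition of the Morrey space does the work, because $\mathcal{L}^{2,\lambda}(\D)$ is \emph{by definition} a subspace of the Hardy space $H^2$; hence it suffices to show $f\notin H^2$, and the Morrey seminorm never needs to be estimated directly.

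First I would verify $f\in B$. Since $f$ is a Hadamard gap series with exponents $n_k=2^k$ satisfying $n_{k+1}/n_k=2>1$ and all coefficients equal to $1$, its membership in the Bloch space follows from the classical characterization of gap series in $B$: $\sum_k a_k z^{n_k}\in B$ if and only if $\sup_k|a_k|<\infty$ (see \cite{obf}). One could instead give the direct estimate, bounding $(1-|z|^2)\sum_n 2^n|z|^{2^n-1}$ by grouping the terms into the dyadic blocks on which $2^n(1-|z|)\approx 1$, but citing the gap-series criterion is cleaner.

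Next I would show $f\notin H^2$. By Parseval's identity the $H^2$ norm of a power series equals the $\ell^2$ norm of its coefficient sequence, so $\|f\|_{H^2}^2=\sum_{n=0}^\infty 1=\infty$, and thus $f\notin H^2$. (Equivalently, as noted in the text, $f$ has a radial limit almost nowhere, while every $H^p$ function has nontangential limits almost everywhere, so $f$ lies in no Hardy space.) Combining the two facts with the inclusion $\mathcal{L}^{2,\lambda}\subseteq H^2$ gives $f\in B\setminus\mathcal{L}^{2,\lambda}$, which is the assertion.

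The argument is short and presents no serious obstacle; the only point requiring care is \emph{not} to attempt a direct computation of the Morrey seminorm of $f$, but rather to exploit the structural fact that $\mathcal{L}^{2,\lambda}$ sits inside $H^2$, reducing everything to the elementary failure of square-summability of the coefficients.
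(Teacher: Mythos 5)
Your proposal is correct and follows essentially the same route as the paper: both use the lacunary series $f(z)=\sum_{n=0}^\infty z^{2^n}$, cite \cite{obf} for $f\in B$, and conclude $f\notin\mathcal{L}^{2,\lambda}$ from the structural fact that $\mathcal{L}^{2,\lambda}\subseteq H^2$ while $f\notin H^2$. The only (harmless) difference is that the paper deduces $f\notin H^2$ from the almost-nowhere existence of radial limits, whereas your primary argument uses Parseval's identity, $\|f\|_{H^2}^2=\sum_{n=0}^\infty 1=\infty$, which is if anything more elementary.
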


\section{Boundedness of $I_g$ and $T_g$ from $\mathcal{L}^{2,\lambda}$ to $B$}
In this section, we prove the boundedness and estimate the norms of $I_g$ and $T_g$. The following lemmas will be used through this paper.
\begin{lemma}
\label{314}
Let $0<\lambda<1$ and $b\in\D$. We set functions $f_b (z)$ and  $F_b (z)$ as
$$f_b (z)=(1-|b|^2)^{\frac{1-\lambda}{2}}(\rho_b (z)-b),~~~F_b (z)=(1-|b|^2)(1-\overline{b}z)^{\frac{\lambda-3}{2}}.$$
then $f_b (z)\in \mathcal{L}^{2,\lambda}(\D)$ and $F_b (z)\in \mathcal{L}^{2,\lambda}(\D)$. Particularly, we have $f_b (z)\in \mathcal{L}^{2,\lambda}_0(\D)$ and $F_b (z)\in \mathcal{L}^{2,\lambda}_0(\D)$. Moreover, $\|f_b\|_{\mathcal{L}^{2,\lambda}}\lesssim 1$,
$\|F_b\|_{\mathcal{L}^{2,\lambda}} \lesssim 1$.
\end{lemma}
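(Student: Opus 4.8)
The plan is to obtain the uniform norm bounds from the M\"obius-invariant characterization $(iii)$ of Lemma~1.1, and to deduce membership in the little spaces from the fact that, for each fixed $b$, both test functions extend analytically across $\partial\D$. First I would record the derivatives. Since $\rho_b'(z)=-(1-|b|^2)(1-\overline{b}z)^{-2}$, a direct computation gives $|f_b'(z)|^2=(1-|b|^2)^{3-\lambda}|1-\overline{b}z|^{-4}$ and $|F_b'(z)|^2\lesssim(1-|b|^2)^2|1-\overline{b}z|^{\lambda-5}$. As $f_b(0)=0$ and $|F_b(0)|=1-|b|^2\le1$, only the supremum terms in $\|\cdot\|_{\mathcal{L}^{2,\lambda}}$ need to be controlled. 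Using $1-|\rho_a(z)|^2=\frac{(1-|a|^2)(1-|z|^2)}{|1-\overline{a}z|^2}$, the quantity in $(iii)$ for $f_b$ equals
\[
(1-|a|^2)^{2-\lambda}(1-|b|^2)^{3-\lambda}\int_{\D}\frac{1-|z|^2}{|1-\overline{a}z|^2|1-\overline{b}z|^4}\,dm(z),
\]
while the expression for $F_b$ is the same with $|1-\overline{b}z|^4$ replaced by $|1-\overline{b}z|^{5-\lambda}$ and the prefactor $(1-|b|^2)^{3-\lambda}$ replaced by $(1-|b|^2)^2$.

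Next I would apply the standard two-point integral estimate: for $c>-1$ and $0<s<c+2<u$,
\[
\int_{\D}\frac{(1-|z|^2)^c}{|1-\overline{a}z|^s|1-\overline{b}z|^u}\,dm(z)\lesssim\frac{1}{|1-\overline{a}b|^s(1-|b|^2)^{u-c-2}}.
\]
For $f_b$ this applies with $c=1$, $s=2$, $u=4$ (so $u-c-2=1$); for $F_b$ with $c=1$, $s=2$, $u=5-\lambda$ (so $u-c-2=2-\lambda$), and the hypotheses $0<s<c+2<u$ hold because $0<\lambda<1$. This reduces the two suprema to $\frac{[(1-|a|^2)(1-|b|^2)]^{2-\lambda}}{|1-\overline{a}b|^2}$ and $\frac{(1-|a|^2)^{2-\lambda}(1-|b|^2)^{\lambda}}{|1-\overline{a}b|^2}$ respectively.

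To finish the norm bounds I would use two elementary inequalities: $(1-|a|^2)(1-|b|^2)\le|1-\overline{a}b|^2$ (equivalently $|\rho_a(b)|\le1$) and $\max\{1-|a|^2,1-|b|^2\}\le2|1-\overline{a}b|$. The first gives, for $f_b$, a bound by $|1-\overline{a}b|^{2-2\lambda}\lesssim1$; the second, applied with the splitting $2=(2-\lambda)+\lambda$, gives for $F_b$ the numerator $\le4|1-\overline{a}b|^2$, hence a bound by $4$. Both are uniform in $a,b\in\D$, yielding $\|f_b\|_{\mathcal{L}^{2,\lambda}}\lesssim1$ and $\|F_b\|_{\mathcal{L}^{2,\lambda}}\lesssim1$, and in particular $f_b,F_b\in\mathcal{L}^{2,\lambda}$.

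For the little-space assertions I would fix $b$: since $|1-\overline{b}z|\ge1-|b|>0$ on $\overline{\D}$, both $f_b$ and $F_b$ are analytic on a neighborhood of $\overline{\D}$, so $f_b',F_b'$ are bounded there by a constant $C_b$. Using the Carleson-box form $(ii)$ together with the facts that $\mathrm{area}(S(I))\approx|I|^2$ and $1-|z|^2\lesssim|I|$ on $S(I)$, we get $\frac{1}{|I|^\lambda}\int_{S(I)}|f_b'(z)|^2(1-|z|^2)\,dm(z)\lesssim C_b^2|I|^{3-\lambda}\to0$ as $|I|\to0$, and likewise for $F_b$; hence $f_b,F_b\in\mathcal{L}^{2,\lambda}_0$. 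The main obstacle is the two-point estimate: one must identify the correct regime (here one exponent is subcritical and one supercritical relative to $c+2$) so that the power of $|1-\overline{a}b|$ comes out exactly equal to $s=2$, which is precisely what lets the prefactors $(1-|a|^2)$ and $(1-|b|^2)$ be absorbed; everything after that is elementary algebra.
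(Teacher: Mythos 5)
Your proposal is correct, but it takes a genuinely different route from the paper for a simple reason: the paper offers no argument at all, proving the lemma by citing Lemma 4 of \cite{pljlzl} and remarking that the little-space membership ``can be deduced from its proof.'' Your computation checks out line by line: $|f_b'(z)|^2=(1-|b|^2)^{3-\lambda}|1-\overline{b}z|^{-4}$ and $|F_b'(z)|^2\lesssim(1-|b|^2)^2|1-\overline{b}z|^{\lambda-5}$ are right; the two-point estimate is applied in the correct regime ($s=2<c+2=3<u$ with $u=4$, resp.\ $u=5-\lambda$); and the exponent bookkeeping closes, giving $[(1-|a|^2)(1-|b|^2)]^{2-\lambda}|1-\overline{a}b|^{-2}\le|1-\overline{a}b|^{2-2\lambda}\lesssim1$ for $f_b$ and $(1-|a|^2)^{2-\lambda}(1-|b|^2)^{\lambda}|1-\overline{a}b|^{-2}\le4$ for $F_b$, uniformly in $a,b$. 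Two caveats. First, the two-point integral estimate you call ``standard'' is itself a nontrivial Forelli--Rudin-type lemma; it is indeed standard in this literature (it is the key tool behind the result the paper cites), but in a final write-up it deserves a reference rather than folklore status. Second, your little-space argument verifies the vanishing Carleson-box condition, whereas the paper defines $\mathcal{L}^{2,\lambda}_0$ by the vanishing boundary-integral condition; the little-oh analogue of the equivalences in Lemma 1.1 is true and is used tacitly elsewhere in the paper (e.g.\ in the proof of Theorem \ref{dl5}), but it is nowhere stated, so you should either cite it (e.g.\ \cite{wul}) or note it explicitly. What your route buys is self-containedness---the reader need not consult \cite{pljlzl}---plus an explicit, quantitative reason why functions analytic across $\overline{\D}$ lie in $\mathcal{L}^{2,\lambda}_0$, precisely the point the paper waves through with ``from its proof we further deduce.''
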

\begin{proof} See Lemma 4 in \cite{pljlzl}. From its proof, we futher deduce that
$f_b (z)\in \mathcal{L}^{2,\lambda}_0(\D)$ and $F_b (z)\in \mathcal{L}^{2,\lambda}_0(\D)$.
\end{proof}

we get a result about the growth rate of functions in
$\mathcal{L}^{2,\lambda}(\D)$ from \cite{pljlzl}.
\begin{lemma}
\label{mbd}
Let $0<\lambda<1$. If $f\in \mathcal{L}^{2,\lambda}(\D)$. then
$$
|f(z)|\lesssim \frac{\|f\|_{\mathcal{L}^{2,\lambda}}} {(1-|z|^2)^\frac{1-\lambda}{2}},~~~~~z\in \D.
$$
\end{lemma}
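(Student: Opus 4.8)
The plan is to reduce the pointwise bound on $f$ to a pointwise bound on its derivative $f'$, and then recover the estimate on $f$ by integrating along a radius. The derivative bound I aim for is
$$|f'(z)|\lesssim \frac{\|f\|_{\mathcal{L}^{2,\lambda}}}{(1-|z|^2)^{(3-\lambda)/2}},\qquad z\in\D,$$
which I would obtain by combining the subharmonicity of $|f'|^2$ with condition $(iii)$ of Lemma 1.1, specialized to a single well-chosen parameter.

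For the derivative estimate I would fix $z_0\in\D$, set $r=\tfrac{1}{2}(1-|z_0|)$, and apply the sub-mean-value inequality for the subharmonic function $|f'|^2$ over the Euclidean disk $D(z_0,r)=\{w:|w-z_0|<r\}$, giving
$$|f'(z_0)|^2\leq \frac{1}{r^2}\int_{D(z_0,r)}|f'(w)|^2\,dm(w).$$
The geometric heart of the argument is that throughout $D(z_0,r)$ one has $1-|w|^2\approx 1-|z_0|^2$ and $|1-\overline{z_0}w|\approx 1-|z_0|^2$, so the Poisson-type kernel $\frac{1-|w|^2}{|1-\overline{z_0}w|^2}$ is essentially constant there and comparable to $\frac{1}{1-|z_0|^2}$. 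Writing $1-|\p_{z_0}(w)|^2=\frac{(1-|z_0|^2)(1-|w|^2)}{|1-\overline{z_0}w|^2}$ and invoking condition $(iii)$ at $a=z_0$, I would chain the estimates to get
$$|f'(z_0)|^2\lesssim \frac{1}{(1-|z_0|^2)^2}\,(1-|z_0|^2)\int_{\D}|f'(w)|^2\frac{1-|w|^2}{|1-\overline{z_0}w|^2}\,dm(w)\lesssim\frac{\|f\|_{\mathcal{L}^{2,\lambda}}^2}{(1-|z_0|^2)^{3-\lambda}},$$
which is the desired derivative bound after taking square roots.

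With the derivative estimate established, I would integrate along the radius, $f(z)=f(0)+\int_0^{|z|}f'\!\big(t\,z/|z|\big)\,(z/|z|)\,dt$, to obtain
$$|f(z)|\leq |f(0)|+\|f\|_{\mathcal{L}^{2,\lambda}}\int_0^{|z|}\frac{dt}{(1-t^2)^{(3-\lambda)/2}}.$$
Since $0<\lambda<1$ forces the exponent $(3-\lambda)/2$ to lie strictly between $1$ and $3/2$, the primitive of the radial integrand converges to a quantity comparable to $(1-|z|^2)^{-(1-\lambda)/2}$ (integration drops exactly one power), and since $|f(0)|\leq\|f\|_{\mathcal{L}^{2,\lambda}}$ is dominated by this growing term, the claimed bound follows.

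The main obstacle is the derivative estimate rather than the final integration: one must choose the disk radius comparable to $1-|z_0|$, carefully verify the two-sided comparisons $1-|w|^2\approx 1-|z_0|^2$ and $|1-\overline{z_0}w|\approx 1-|z_0|^2$ on that disk, and check that a single application of condition $(iii)$ at $a=z_0$ suffices to control $\int_{\D}|f'|^2\frac{1-|w|^2}{|1-\overline{z_0}w|^2}\,dm$. Once these comparisons are in hand, tracking the powers of $1-|z_0|^2$ through the mean-value inequality and the subsequent radial integral is routine bookkeeping.
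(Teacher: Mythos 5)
Your proof is correct, but it is worth knowing that the paper does not actually prove this lemma: it is quoted wholesale from Li--Liu--Lou \cite{pljlzl} ("we get a result about the growth rate ... from \cite{pljlzl}"), with no argument given. So your write-up is a self-contained replacement for a citation, and it is the standard argument one would expect to find behind it. Every step checks out: with $r=\tfrac12(1-|z_0|)$ one has, for $w\in D(z_0,r)$, the lower bound $|1-\overline{z_0}w|\ge 1-|z_0||w|\ge 1-|z_0|$ and the upper bound $|1-\overline{z_0}w|\le (1-|z_0|^2)+|z_0-w|\lesssim 1-|z_0|^2$, so indeed $1-|\p_{z_0}(w)|^2\approx 1$ on that disk; the sub-mean-value inequality for $|f'|^2$ (with the paper's normalized measure $dm$) then lets you pass to the M\"obius-invariant integral, and a single application of condition $(iii)$ of Lemma 1.1 at $a=z_0$, together with the norm equivalence recorded right after that lemma, gives
$$
|f'(z_0)|^2\lesssim \frac{1}{(1-|z_0|^2)^{2}}\int_{\D}|f'(w)|^2\bigl(1-|\p_{z_0}(w)|^2\bigr)\,dm(w)\lesssim \frac{\|f\|^2_{\mathcal{L}^{2,\lambda}}}{(1-|z_0|^2)^{3-\lambda}}.
$$
Since $(3-\lambda)/2>1$, radial integration drops exactly one power, and $|f(0)|\le\|f\|_{\mathcal{L}^{2,\lambda}}$ is absorbed because $(1-|z|^2)^{-(1-\lambda)/2}\ge 1$. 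Two cosmetic remarks: the phrase "the primitive of the radial integrand converges" should be "is bounded by a constant multiple of $(1-|z|^2)^{-(1-\lambda)/2}$," since the integral in fact diverges at exactly that rate as $|z|\to 1$; and the fact that $(3-\lambda)/2<3/2$ plays no role --- only the strict inequality $(3-\lambda)/2>1$ is needed. What your approach buys is independence from the external reference; what the paper's citation buys is brevity, at the cost of leaving the reader to track down the estimate in \cite{pljlzl}.
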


We first consider the boundedness of $I_g:\mathcal{L}^{2,\lambda} \rightarrow B$ .

\begin{theorem}
\label{3dl1}
Let $0<\lambda<1$ and $g\in H(\D)$. Then $I_g:\mathcal{L}^{2,\lambda} \rightarrow B$ is bounded if and only if
$g\in H_{ \frac{\lambda-1}{2} }^\infty.$ Moreover the operator norm satisfies $$
\|I_g\| \thickapprox \|g\|_\frac{\lambda-1}{2}.$$
\end{theorem}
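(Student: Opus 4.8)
The plan is to establish a clean two-sided estimate $\|I_g\| \thickapprox \|g\|_{\frac{\lambda-1}{2}}$ by separately bounding $\|I_g f\|_B$ above in terms of $\|g\|_{\frac{\lambda-1}{2}}\|f\|_{\mathcal{L}^{2,\lambda}}$, and below by testing against the explicit family $f_b$ from Lemma~\ref{314}. Recall that $(I_g f)'(z)=f'(z)g(z)$, so by definition $\|I_g f\|_B = \sup_{z\in\D}(1-|z|^2)|f'(z)||g(z)|$ and $(I_g f)(0)=0$, which already identifies the Bloch seminorm as the only quantity to control.

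For the sufficiency and the upper norm bound, I would start from the Bloch seminorm expression and insert the pointwise growth estimate for the derivative of a Morrey function. Here the key input is a derivative version of Lemma~\ref{mbd}: functions in $\mathcal{L}^{2,\lambda}$ satisfy $(1-|z|^2)|f'(z)|\lesssim \|f\|_{\mathcal{L}^{2,\lambda}}(1-|z|^2)^{\frac{\lambda-1}{2}}$, i.e. $(1-|z|^2)^{\frac{1-\lambda}{2}+1}|f'(z)|\lesssim\|f\|_{\mathcal{L}^{2,\lambda}}$. Granting this, one computes
\begin{eqnarray*}
(1-|z|^2)|f'(z)||g(z)| &=& (1-|z|^2)^{\frac{1-\lambda}{2}+1}|f'(z)|\cdot(1-|z|^2)^{\frac{\lambda-1}{2}}|g(z)|\\
&\lesssim& \|f\|_{\mathcal{L}^{2,\lambda}}\,\|g\|_{\frac{\lambda-1}{2}},
\end{eqnarray*}
and taking the supremum over $z$ yields $\|I_g f\|_B\lesssim\|g\|_{\frac{\lambda-1}{2}}\|f\|_{\mathcal{L}^{2,\lambda}}$, so $I_g$ is bounded with $\|I_g\|\lesssim\|g\|_{\frac{\lambda-1}{2}}$.

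For the necessity and the lower norm bound, I would test the bounded operator on the normalized test functions $f_b$. Since $\|f_b\|_{\mathcal{L}^{2,\lambda}}\lesssim 1$, boundedness gives $\|I_g f_b\|_B\lesssim\|I_g\|$. The idea is to evaluate the Bloch seminorm of $I_g f_b$ at the single point $z=b$, where the factor $(1-|b|^2)|f_b'(b)|$ should be comparable to $(1-|b|^2)^{\frac{\lambda-1}{2}}$; this forces $(1-|b|^2)^{\frac{1-\lambda}{2}}|g(b)|\lesssim\|I_g\|$ after rearranging, and taking the supremum over $b\in\D$ produces $\|g\|_{\frac{\lambda-1}{2}}\lesssim\|I_g\|$ together with the conclusion $g\in H_{\frac{\lambda-1}{2}}^\infty$. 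Combining the two directions gives the claimed comparability.

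The main obstacle I anticipate is the lower-bound step: one must verify that $(1-|b|^2)|f_b'(b)|$ is bounded \emph{below} by a positive constant multiple of $(1-|b|^2)^{\frac{\lambda-1}{2}}$ uniformly in $b$, not merely above. Since $f_b(z)=(1-|b|^2)^{\frac{1-\lambda}{2}}(\rho_b(z)-b)$ and $\rho_b'(z)=\frac{|b|^2-1}{(1-\overline{b}z)^2}$, a direct computation gives $(1-|b|^2)|f_b'(b)|=(1-|b|^2)^{\frac{1-\lambda}{2}}\cdot\frac{1-|b|^2}{1-|b|^2}=(1-|b|^2)^{\frac{1-\lambda}{2}}$, so evaluating at $z=b$ the factor $|\rho_b'(b)|=(1-|b|^2)^{-1}$ exactly cancels, and the required lower bound holds with an explicit constant. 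If the single-point evaluation at $z=b$ does not cleanly produce the sharp constant in the upper-bound direction, the derivative growth lemma must be established carefully (via the Lemma~2.1(iii) square-function characterization of the $\mathcal{L}^{2,\lambda}$ norm combined with a subharmonicity / mean-value estimate for $|f'|^2$ on a pseudohyperbolic disk centered at $z$), and this auxiliary growth estimate is really the technical heart supporting the whole argument.
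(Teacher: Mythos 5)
Your sufficiency half is correct, and it takes a more elementary route than the paper: the paper identifies $B$ with the $Q_{2-\lambda}$-type integral characterization of the Bloch norm and estimates an integral against the Morrey norm, whereas you work pointwise; the derivative bound $(1-|z|^2)^{\frac{3-\lambda}{2}}|f'(z)|\lesssim \|f\|_{\mathcal{L}^{2,\lambda}}$ that you need does follow from Lemma \ref{mbd} together with a Cauchy estimate on the disk $|w-z|<(1-|z|)/2$ (or, as you suggest, from Lemma 1.1(iii) and subharmonicity). The necessity half, however, has a genuine gap: an exponent sign error that invalidates the conclusion. For $f_b(z)=(1-|b|^2)^{\frac{1-\lambda}{2}}(\rho_b(z)-b)$ as stated in Lemma \ref{314}, your computation $(1-|b|^2)|f_b'(b)|=(1-|b|^2)^{\frac{1-\lambda}{2}}$ is right, but then the single-point evaluation yields only
$$(1-|b|^2)^{\frac{1-\lambda}{2}}|g(b)|\lesssim\|I_g\|,$$
i.e. $g\in H^\infty_{\frac{1-\lambda}{2}}$, whereas the theorem requires $\sup_b(1-|b|^2)^{\frac{\lambda-1}{2}}|g(b)|=\sup_b |g(b)|(1-|b|^2)^{-\frac{1-\lambda}{2}}\lesssim\|I_g\|$. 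Since $\frac{\lambda-1}{2}=-\frac{1-\lambda}{2}<0$, the weight you obtain is the reciprocal of the one you need, and your inequality degenerates as $|b|\to1$: for instance $g\equiv1$ satisfies it but does not lie in $H^\infty_{\frac{\lambda-1}{2}}$. Your own text betrays the slip: you first say the factor ``should be comparable to $(1-|b|^2)^{\frac{\lambda-1}{2}}$'' and then compute it to be $(1-|b|^2)^{\frac{1-\lambda}{2}}$; these are not the same quantity, and the exact cancellation you celebrate is precisely the problem --- it cancels too much.

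The strategy itself is salvageable, and the repair exposes an inconsistency in the paper. With the normalization of Lemma \ref{314} one actually has $\|f_b\|_{\mathcal{L}^{2,\lambda}}\lesssim(1-|b|^2)^{1-\lambda}\to0$, so testing with $f_b$ and using only $\|f_b\|_{\mathcal{L}^{2,\lambda}}\lesssim1$ cannot detect the weight $(1-|z|^2)^{\frac{\lambda-1}{2}}$. The test function that works is the oppositely normalized
$$\tilde f_b(z)=(1-|b|^2)^{\frac{\lambda-1}{2}}(\rho_b(z)-b)=-(1-|b|^2)^{\frac{1+\lambda}{2}}\frac{z}{1-\overline{b}z},$$
whose Morrey norm is $\thickapprox1$; this is the function the paper actually computes with, since in $(\ref{31})$ it uses $|f_b'(z)|^2=(1-|b|^2)^{1+\lambda}/|1-\overline{b}z|^4$ and in Theorem \ref{dl3} it writes $f_n(z)=-(1-|b_n|^2)^{\frac{1+\lambda}{2}}\int_0^z(1-\overline{b_n}w)^{-2}dw$ --- so the exponent in the statement of Lemma \ref{314} is a typo, which you copied. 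With $\tilde f_b$ your argument closes immediately: $(1-|b|^2)|\tilde f_b'(b)|\,|g(b)|=(1-|b|^2)^{\frac{\lambda-1}{2}}|g(b)|\leq\|I_g\tilde f_b\|_B\lesssim\|I_g\|$, and this repaired version is in fact cleaner than the paper's necessity proof, which restricts the supremum to $a=b$, changes variables, and invokes Lemma 4.12 of Zhu's book. As written, though, the lower bound --- which you correctly identified as the technical heart --- does not hold, so the proposal is incomplete.
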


\begin{proof}
For $0<\lambda<1$, $1<2-\lambda$, we set $B=Q_{2-\lambda}.$

Sufficiency: let $g\in H_{ \frac{\lambda-1}{2} }^\infty$. For any $f\in \mathcal{L}^{2,\lambda}(\D)$, we have
\begin{eqnarray*}
\|I_g f\|_B
&\approx& \sup_{a\in \D} \big(\int_{\D} |f'(z)|^2|g(z)|^2(1-|\varphi_a (z)|^2)^{2-\lambda} dm(z)\big)^{1/2}\\
&=&\sup_{a\in \D} \big( (1-|a|^2)^{1-\lambda} \int_{\D} |f'(z)|^2(1-|\varphi_a (z)|^2)|g(z)|^2 (\frac{1-|z|^2}{|1-\overline{a}z|^2})^{1-\lambda}dm(z)\big)^{1/2}\\
&\lesssim&\|g\|_\frac{\lambda-1}{2} \cdot \|f\|_{\mathcal{L}^{2,\lambda}}.
\end{eqnarray*}
These inequalities imply $I_g$ is bounded and $\|I_g\| \lesssim \|g\|_\frac{\lambda-1}{2}.$

Necessity: let $I_g$ is bounded. For any $b\in\D$, considering functions $f_b (z)$ in Lemma \ref{314}, we have $\|f_b\|_{\mathcal{L}^{2,\lambda}}\lesssim 1$. Thus
\begin{eqnarray}
\label{31}
\|I_g \| &\gtrsim& \|I_g f_b\|_B  \nonumber \\
&\approx& \sup_{a\in \D} \big(\int_{\D} |f_b'(z)|^2|g(z)|^2(1-|\varphi_a (z)|^2)^{2-\lambda} dm(z)\big)^{1/2}   \nonumber\\
&=& \sup_{a\in \D} \big(\int_{\D} \frac{(1-|b|^2)^{1+\lambda}}{|1-\overline{b}z|^4}|g(z)|^2(1-|\varphi_a (z)|^2)^{1+1-\lambda} dm(z)\big)^{1/2} \nonumber   \\
&\geqslant&   \big( \int_{\D} \frac{(1-|b|^2)^2}{|1-\overline{b}z|^4}|g(z)|^2 (\frac{1-|z|^2}{|1-\overline{b}z|^2})^{1-\lambda} (1-|\rho_b (z)|^2) dm(z)\big)^{1/2}\\
&= & \big( \int_{\D} |\rho_b '(z)|^2 |g(z)|^2 (\frac{1-|z|^2}{|1-\overline{b}z|^2})^{1-\lambda} (1-|\rho_b (z)|^2) dm(z)\big)^{1/2}  \nonumber \\
&= &\big( \int_{\D}  |g(\rho_b(w))|^2 (\frac{1-|\rho_b(w)|^2}{|1-\overline{b}\rho_b(w)|^2})^{1-\lambda} (1-|w|^2) dm(w)\big)^{1/2} \nonumber \\
&\gtrsim &|\frac{g(b)}{(1-|b|^2)^{\frac{1-\lambda}{2}}}|. \nonumber
\end{eqnarray}
where we used Lemma 4.12 of $\cite{otifs}$  in the last inequality. Since $b$ is arbitrary, we have $\|I_g \|\gtrsim \|g\|_\frac{\lambda-1}{2}.$  The proof is finished.
\end{proof}

With the space $B\mathcal{L}^{p,\lambda}$, we can establish the boundedness of $T_g:\mathcal{L}^{2,\lambda} \rightarrow B$ as the following theorem.

\begin{theorem}
\label{dl2}
Suppose that $0<\lambda<1$ and $g\in H(\D)$. Then the following conditions are equivalent:

$(i)$ $T_g:\mathcal{L}^{2,\lambda} \rightarrow B$ is bounded;

$(ii)$ $g\in B\mathcal{L}^{p,\lambda}$ for all $p\in (1,\infty)$;

$(iii)$ $g\in B\mathcal{L}^{p,\lambda}$ for some $p\in (1,\infty)$ .

Moreover, $$\|T_g\| \thickapprox M(g).$$
\end{theorem}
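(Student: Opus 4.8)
The plan is to reformulate boundedness of $T_g$ through the integral form of the Bloch norm and then play the growth estimate of Lemma \ref{mbd} against the test functions of Lemma \ref{314}. Since $(T_g f)'(z)=f(z)g'(z)$ and $T_g f(0)=0$, the equivalent Bloch-space norm recalled in the Introduction gives, for every $p\in(1,\infty)$,
$$\|T_g f\|_B^2\thickapprox\sup_{a\in\D}\int_{\D}|f(z)|^2|g'(z)|^2\big(1-|\varphi_a(z)|^2\big)^{p}\,dm(z),$$
and I shall keep $p$ as a free parameter, tuning it separately in each implication; throughout I will use $1-|\varphi_a(z)|^2=\frac{(1-|a|^2)(1-|z|^2)}{|1-\overline{a}z|^2}$. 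The implication $(ii)\Rightarrow(iii)$ is trivial, so it remains to prove $(iii)\Rightarrow(i)$ and $(i)\Rightarrow(ii)$, and to track the constants so that $\|T_g\|\thickapprox M(g)$ falls out.

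For $(iii)\Rightarrow(i)$, assume $g\in B\mathcal{L}^{p_0,\lambda}$ for some $p_0>1$ and run the identity above with $p=p_0+1-\lambda$ (so that $p>1$). Inserting the pointwise bound $|f(z)|^2\lesssim\|f\|_{\mathcal{L}^{2,\lambda}}^2(1-|z|^2)^{\lambda-1}$ of Lemma \ref{mbd} into the integrand and expanding $\big(1-|\varphi_a(z)|^2\big)^{p}$, the powers of $(1-|z|^2)$ collapse to exponent $q:=p-1+\lambda=p_0$, while the remaining factor $\big(\frac{1-|a|^2}{|1-\overline{a}z|^2}\big)^{p}$ carries exactly the exponent $q+1-\lambda$. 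Thus the integral becomes, up to the constant $\|f\|_{\mathcal{L}^{2,\lambda}}^2$, precisely the Carleson-type expression that the Introduction identifies with $M(g)^2$ for $B\mathcal{L}^{p_0,\lambda}$, whence $\|T_g f\|_B\lesssim\|f\|_{\mathcal{L}^{2,\lambda}}\,M(g)$ and $\|T_g\|\lesssim M(g)$. This step is essentially a bookkeeping of exponents; the only delicate point is the forced choice $p=p_0+1-\lambda$.

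For $(i)\Rightarrow(ii)$, I fix an arbitrary $p\in(1,\infty)$ and test $T_g$ on the functions $F_b$ of Lemma \ref{314}, for which $\|F_b\|_{\mathcal{L}^{2,\lambda}}\lesssim1$. Taking $a=b$ in the integral form and discarding all but the Carleson box $S(I_b)$ over the arc $I_b$ centered at $b/|b|$ with $|I_b|\thickapprox1-|b|^2$, I will use $|F_b(z)|^2=(1-|b|^2)^2|1-\overline{b}z|^{\lambda-3}$ together with the standard localization $|1-\overline{b}z|\thickapprox1-|b|^2\thickapprox|I_b|$ on $S(I_b)$. Counting the powers of $(1-|b|^2)$ then turns the lower bound into $\frac{1}{|I_b|^{\,p-\lambda+1}}\int_{S(I_b)}|g'(z)|^2(1-|z|^2)^{p}\,dm(z)$, which $\|T_g\|^2\gtrsim\|T_g F_b\|_B^2$ dominates. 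Letting $b$ range so that $I_b$ exhausts all arcs — for an arbitrary arc $I$ one takes $b=(1-|I|)\zeta$ with $\zeta$ the midpoint of $I$, and the full circle $I=\partial\D$ is covered by $b=0$, where $F_0\equiv1$ and $T_gF_0=g-g(0)$ fills the whole disk — yields $M(g)\lesssim\|T_g\|$ for every $p$, i.e. $g\in B\mathcal{L}^{p,\lambda}$ for all $p$. Combined with the sufficiency, this gives $\|T_g\|\thickapprox M(g)$. I expect the main obstacle to lie in this necessity step: one must check that $F_b$ concentrates on $S(I_b)$ sharply enough that the localization $|1-\overline{b}z|\thickapprox1-|b|^2$ costs nothing, and that the family $\{I_b\}$ genuinely recovers the supremum over \emph{all} arcs; by contrast the sufficiency direction is a mechanical application of Lemma \ref{mbd}.
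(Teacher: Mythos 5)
Your proposal is correct, and it splits into a half that matches the paper and a half that does not. The necessity $(i)\Rightarrow(ii)$ is essentially the paper's own argument: both of you test $T_g$ on the functions $F_b$ of Lemma \ref{314} with $b=(1-|I|)\zeta$ and use $|F_b(z)|^2\thickapprox |I|^{\lambda-1}$ together with $|1-\overline{b}z|\thickapprox 1-|b|^2\thickapprox |I|$ on $S(I)$; whether one then takes $a=b$ in the M\"obius form of the Bloch norm, as you do, or quotes the Carleson-box form of Proposition \ref{mt2}, as the paper does, is immaterial. The sufficiency $(iii)\Rightarrow(i)$ is where you genuinely diverge. The paper applies Lemma \ref{mbd} in the Carleson-box picture, reducing matters to bounding $\sup_I |I|^{-p}\int_{S(I)}|g'(z)|^2(1-|z|^2)^{p+\lambda-1}dm(z)$ by $M(g)^2$, and proves this bound by hand: it decomposes $S(I)$ into the dyadic top halves $T(J)$, $J\in\mathcal{D}_n(I)$, uses $1-|z|\thickapprox|J|$ on $T(J)$, and sums the geometric series $\sum_n 2^{n(1-p)}$ --- the only point where $p>1$ enters. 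You instead exploit the free exponent in Proposition \ref{mt2}: choosing it to be $p_0+1-\lambda$ (still $>1$ because $p_0>1$), Lemma \ref{mbd} turns the integrand exactly into $|g'(z)|^2(1-|z|^2)^{p_0}\big(\frac{1-|a|^2}{|1-\overline{a}z|^2}\big)^{p_0+1-\lambda}$, which is the alternative expression for the $B\mathcal{L}^{p_0,\lambda}$-norm quoted in the Introduction from \cite{radsjx}. Your route is shorter and avoids all dyadic bookkeeping, but it outsources precisely that work to the cited Carleson-measure equivalence; note you need it as a \emph{seminorm} equivalence, which is legitimate since both expressions annihilate constants (replace $g$ by $g-g(0)$). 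The paper's computation is self-contained and, as a by-product, makes Corollary \ref{tl1} (independence of $B\mathcal{L}^{p,\lambda}$ of $p$) transparent. Your extra care with the full-circle arc ($b=0$, $F_0\equiv 1$, $T_gF_0=g-g(0)$) is sound, though the paper's localization constants are uniform in $|I|$ and already cover that case.
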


\begin{proof}

$(i)\Rightarrow (ii)$.  Suppose that $T_g:\mathcal{L}^{2,\lambda} \rightarrow B$  is bounded. For any $I \subset \partial\mathbbm{D}$, let $b=(1-|I|)\zeta  \in \D$, where $\zeta$ is the centre of $I$. Then
$$(1-|b|^2)\approx |1-\overline{b}z| \approx |I|, ~~z\in S(I).$$
Considering the functions $F_b (z)$ in Lemma \ref{314}, $\|F_b\|_{\mathcal{L}^{2,\lambda}} \lesssim 1$. This together with Proposition \ref{mt2}, we obtain that for any $p\in (1,\infty)$,
\begin{eqnarray*}
\frac{1}{|I|^{p-\lambda+1}} \int_{S(I)} |g'(z)|^2(1-|z|^2)^{p}dm(z)
&\thickapprox& \frac{1}{|I|^p} \int_{S(I)} |F_b(z)|^2|g'(z)|^2(1-|z|^2)^{p}dm(z)\\
&\lesssim& \|T_g F_b\|_B^2\\
&\leq& \|T_g\|^2\|F_b\|^2_{\mathcal{L}^{2,\lambda}}\\
&\lesssim&\|T_g\|^2.
\end{eqnarray*}
Since $I$ is arbitrary, we have $M(g)\lesssim\|T_g\|$.

$(ii)\Rightarrow (iii)$. It is obvious.

$(iii)\Rightarrow (i)$.  Suppose that fixed $p\in (1,\infty)$ and $M(g)<\infty$. For $f\in \mathcal{L}^{2,\lambda}(\D)$ and any $I \subset \partial\mathbbm{D}$, from Lemma $\ref{mbd}$, it follows that
\begin{eqnarray}
\begin{split}
\label{32}
\|T_g f\|_B
&\approx& \sup_{I \subset \partial\mathbbm{D}} \big(\frac{1}{|I|^p} \int_{S(I)}|f(z)|^2 |g'(z)|^2(1-|z|^2)^{p}dm(z)\big)^{1/2}~~~~~~~~~~\\
&\lesssim& \|f\|_{\mathcal{L}^{2,\lambda}}  \cdot \sup_{I \subset \partial\mathbbm{D}} \big(\frac{1}{|I|^p} \int_{S(I)} |g'(z)|^2(1-|z|^2)^{p+\lambda-1}dm(z)\big)^{1/2}.
\end{split}
\end{eqnarray}

 To the end, for a given subarc $I$ of $\partial\mathbbm{D}$, let $\mathcal{D}_n(I)$  represent the set of $2^n$  subarcs of length
$2^{-n}|I|$ obtained by $n$ successive bipartition of $I$. For each $J\in \mathcal{D}_n(I)$ write $T(J)$ for the top half Carleson box of $S(J)$, i.e.,$$T(J)=\{z\in S(J):\frac{z}{|z|}\in J, 1-|J|<|z|<1-\frac{|J|}{2}\}.$$

Then
$$S(I)=\mathop{\bigcup}\limits_{n=0}^\infty \mathop{\bigcup}\limits_{J\in \mathcal{D}_n(I)}T(J). $$
Noting that $z\in T(J)$, $1-|z|\approx |J|$,
one has
\begin{eqnarray}
\label{33}
\int_{S(I)} |g'(z)|^2(1-|z|^2)^{p+\lambda-1}dm(z)
&=&\sum_{n=0}^\infty\sum_{J\in \mathcal{D}_n(I)}\int_{T(J)} |g'(z)|^2(1-|z|^2)^{p+\lambda-1}dm(z)    \nonumber \\
&\approx&\sum_{n=0}^\infty\sum_{J\in \mathcal{D}_n(I)}\int_{T(J)} |J|^{\lambda-1}|g'(z)|^2(1-|z|^2)^{p}dm(z)  \nonumber  \\
&\leq&\sum_{n=0}^\infty\sum_{J\in \mathcal{D}_n(I)}\int_{S(J)} |J|^{\lambda-1}|g'(z)|^2(1-|z|^2)^{p}dm(z)  \nonumber\\
&\leq& \sum_{n=0}^\infty\sum_{J\in \mathcal{D}_n(I)}M(g)^2 |J|^{\lambda-1}|J|^{p-\lambda+1}\\
&=& \sum_{n=0}^\infty 2^n M(g)^2|J|^p   \nonumber   \\
&=& \sum_{n=0}^\infty (2^n)^{1-p}M(g)^2|I|^p  \nonumber\\
&\lesssim& M(g)^2|I|^p.\nonumber
\end{eqnarray}
Now invoking $(\ref{32})$,
\begin{eqnarray*}
\|T_g f\|_B\lesssim M(g)\cdot\|f\|_{\mathcal{L}^{2,\lambda}}
\end{eqnarray*}
As a result, $\|T_g\|\lesssim M(g)$.

\end{proof}

Theorem $\ref{dl2}$ has an interesting consequence.
\begin{corollary}
\label{tl1}
Let $0<\lambda<1$ and $1<p<q<\infty$. Then $B\mathcal{L}^{p,\lambda}=B\mathcal{L}^{q,\lambda}$.
\end{corollary}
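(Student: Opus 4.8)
The plan is to deduce the corollary directly from the characterization of boundedness of $T_g$ established in Theorem \ref{dl2}. The key observation is that the membership $g \in B\mathcal{L}^{p,\lambda}$ was shown there to be equivalent to a property that makes no reference to $p$ at all, namely the boundedness of the operator $T_g : \mathcal{L}^{2,\lambda} \to B$. Thus the parameter $p \in (1,\infty)$ is, as far as the space $B\mathcal{L}^{p,\lambda}$ as a \emph{set} is concerned, a red herring.

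Concretely, fix $0<\lambda<1$ and exponents $1<p<q<\infty$. First I would take an arbitrary $g \in B\mathcal{L}^{p,\lambda}$. Since $p \in (1,\infty)$, condition $(iii)$ of Theorem \ref{dl2} holds for this particular $g$, and hence by the equivalence $(iii) \Rightarrow (i)$ the operator $T_g : \mathcal{L}^{2,\lambda} \to B$ is bounded. Now reading the implication $(i) \Rightarrow (ii)$ of the same theorem, boundedness of $T_g$ forces $g \in B\mathcal{L}^{r,\lambda}$ for \emph{all} $r \in (1,\infty)$; in particular $g \in B\mathcal{L}^{q,\lambda}$. This shows $B\mathcal{L}^{p,\lambda} \subseteq B\mathcal{L}^{q,\lambda}$. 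The reverse inclusion $B\mathcal{L}^{q,\lambda} \subseteq B\mathcal{L}^{p,\lambda}$ is obtained by running exactly the same argument with the roles of $p$ and $q$ interchanged, which is legitimate because the equivalence in Theorem \ref{dl2} is symmetric in the choice of exponent. Combining the two inclusions gives $B\mathcal{L}^{p,\lambda} = B\mathcal{L}^{q,\lambda}$ as sets.

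If one wants equality of the spaces as normed spaces rather than merely as sets, I would add a short remark invoking the closed graph theorem: both $B\mathcal{L}^{p,\lambda}$ and $B\mathcal{L}^{q,\lambda}$ are Banach spaces under their respective norms (as noted in the paper right after the definition), and the identity map between two Banach spaces that agree as sets and whose norms are both stronger than, say, pointwise evaluation is automatically a bounded isomorphism, so $\|g\|_{B\mathcal{L}^{p,\lambda}} \thickapprox \|g\|_{B\mathcal{L}^{q,\lambda}}$. Alternatively, and more in the spirit of the estimate $\|T_g\| \thickapprox M(g)$ appearing in Theorem \ref{dl2}, one sees that for every admissible exponent $r$ the quantity $M_r(g)$ (the $B\mathcal{L}^{r,\lambda}$-seminorm) is comparable to the single operator norm $\|T_g\|$, whence all the seminorms $M_r(g)$ are comparable to one another with constants independent of $g$.

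I do not expect any genuine obstacle here: the whole content of the corollary has already been packed into the theorem, and the argument is purely a matter of chaining the stated equivalences. The only point requiring a word of care is that the implication $(i) \Rightarrow (ii)$ genuinely yields membership for \emph{all} $p$, not just the one we started with — but this is exactly what condition $(ii)$ asserts, so the logic closes without further work. The proof is therefore essentially a one-line corollary, and the main value of stating it separately is to highlight the striking fact that the scale $\{B\mathcal{L}^{p,\lambda}\}_{p>1}$ is constant in $p$.
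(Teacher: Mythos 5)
Your argument is correct and is precisely the one the paper intends: the corollary is stated as an immediate consequence of Theorem \ref{dl2}, obtained by chaining $(iii)\Rightarrow(i)\Rightarrow(ii)$ for each of the two exponents. Your additional remark on comparability of the norms via $\|T_g\|\thickapprox M(g)$ is a harmless (and correct) refinement beyond what the corollary asserts.
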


\section{Essential norm of $I_g$ and $T_g$ from $\mathcal{L}^{2,\lambda}$ to $B$ }
Let $X$ and $Y$ be Banach spaces. The essential norm  of a bounded operator $T:X \rightarrow Y$ ,$\|T\|_{e,X \rightarrow Y} $, is
defined as the distance from $T$ to the space of compact operators, $$\|T\|_{e,X \rightarrow Y}=inf\{\|T-K\|_{X \rightarrow Y}:~~~K~is~any~compact ~operator \},$$where the norm of $T$ is denoted by $\|\cdot\|_{X \rightarrow Y} $.

Since that $T$ is compact if and only if $\|T\|_{e,X \rightarrow Y} =0$, then the estimation of $\|T\|_{e,X \rightarrow Y} $
indicates the condition for $T$ to be compact.  For some recent results related to  the essential norm , see
\cite{jlsmpn,jlzlcx,js,pljlzl}. and the references therein.

In this section, we estimate {the essential norm of $I_g$ and $T_g$ from $\mathcal{L}^{2,\lambda}$ to $B$.  We need some auxiliary results.

\begin{lemma}
\label{yl5}
Let $0<\lambda<1$. For $0<t<1$, $z\in \D$, $f_t(z)=f(tz)$. If $f\in \mathcal{L}^{2,\lambda}(\D)$, then
$f_t\in \mathcal{L}^{2,\lambda}_0 (\D)$ and $\|f_t\|_{\mathcal{L}^{2,\lambda}}\leq \|f\|_{\mathcal{L}^{2,\lambda}}$.
\end{lemma}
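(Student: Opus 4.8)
The plan is to separate the two assertions: that $f_t$ lies in the little space $\mathcal{L}^{2,\lambda}_0(\D)$, and the norm estimate $\|f_t\|_{\mathcal{L}^{2,\lambda}}\le\|f\|_{\mathcal{L}^{2,\lambda}}$. The membership is the soft part and I would dispatch it first. Since $0<t<1$, the function $f_t(z)=f(tz)$ is analytic on $\tfrac1t\D$, a disc compactly containing $\overline{\D}$, so $f_t'(z)=tf'(tz)$ is bounded on $\overline{\D}$, say $|f_t'|\le C_t$ there. Then for every arc $I\subset\partial\D$,
\[
\frac{1}{|I|^\lambda}\int_{S(I)}|f_t'(z)|^2(1-|z|^2)\,dm(z)\le\frac{C_t^2}{|I|^\lambda}\int_{S(I)}(1-|z|^2)\,dm(z)\lesssim C_t^2\,|I|^{3-\lambda}.
\]
As $3-\lambda>0$, the right-hand side is uniformly bounded for $|I|\le1$ and tends to $0$ as $|I|\to0$; hence $f_t\in\mathcal{L}^{2,\lambda}(\D)$ and in fact $f_t\in\mathcal{L}^{2,\lambda}_0(\D)$ directly. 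Note this soft bound only gives $\|f_t\|_{\mathcal{L}^{2,\lambda}}\le|f(0)|+C_t$ with $C_t$ blowing up as $t\to1$, so the sharp $t$-uniform inequality still requires a genuine argument.

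For the norm inequality I would record the clean change-of-variables identity obtained by putting $w=tz$ (so $dm(z)=t^{-2}dm(w)$ and $S(I)\mapsto tS(I)$):
\[
\int_{S(I)}|f_t'(z)|^2(1-|z|^2)\,dm(z)=\int_{tS(I)}|f'(w)|^2\Bigl(1-\tfrac{|w|^2}{t^2}\Bigr)\,dm(w)\le\int_{tS(I)}|f'(w)|^2(1-|w|^2)\,dm(w),
\]
the last step using $1-|w|^2/t^2\le1-|w|^2$. This reduces matters to bounding $\mu(tS(I))$, where $d\mu=|f'|^2(1-|w|^2)\,dm$ and $f\in\mathcal{L}^{2,\lambda}(\D)$ gives $\mu(S(J))\le N^2|J|^\lambda$ for all arcs $J$, with $N=\sup_I(|I|^{-\lambda}\mu(S(I)))^{1/2}$. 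The difficulty here is that $tS(I)$ is a radially contracted box that is \emph{not} contained in $S(I)$, so $\mu(tS(I))\le\mu(S(I))$ is false in general; covering $tS(I)$ by a single boundary box of comparable angular width forces its depth up to order $(1-t)+|I|$ and destroys the sharp constant.

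To reach the contraction with constant exactly one I would instead work with the equivalent oscillation description of $\mathcal{L}^{2,\lambda}$ from the Introduction and exploit that dilation is a boundary averaging operator. Writing $f^*$ for the boundary values of $f\in H^2$, one has $f_t^*=p_t*f^*$, where $p_t$ is the Poisson kernel at radius $t$, a rotation-invariant probability density; thus $f_t^*$ is an average of the rotations $R_\eta f^*$ against $p_t$. For a fixed arc $I$ the map $h\mapsto\|h-h_I\|_{L^2(I)}$ is a seminorm, so Minkowski's integral inequality yields $\|f_t^*-(f_t^*)_I\|_{L^2(I)}\le\int\|R_\eta f^*-(R_\eta f^*)_I\|_{L^2(I)}\,p_t(\eta)\,\tfrac{|d\eta|}{2\pi}$, and rotation invariance replaces each integrand by the oscillation of $f^*$ over the rotated arc $\eta I$ of the same length $|I|$. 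Dividing by $|I|^{\lambda/2}$ and taking suprema gives $\sup_I\mathrm{osc}_I(f_t^*)\le\sup_J\mathrm{osc}_J(f^*)$, i.e.\ the oscillation seminorm does not increase; combined with $f_t(0)=f(0)$ this delivers $\|f_t\|_{\mathcal{L}^{2,\lambda}}\le\|f\|_{\mathcal{L}^{2,\lambda}}$.

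The main obstacle is exactly this sharp-constant step. The change of variables alone produces only $\|f_t\|_{\mathcal{L}^{2,\lambda}}\lesssim\|f\|_{\mathcal{L}^{2,\lambda}}$, and upgrading $\lesssim$ to $\le$ is where the work lies: one must either invoke the convexity/rotation-averaging argument above, or else replace the lossy single-box covering of $tS(I)$ by a Whitney-type decomposition into the top halves $T(J)$ (as in the proof of Theorem \ref{dl2}) and carefully use the decay of the weight $1-|w|^2/t^2$ near $|w|=t$, which is what prevents the summation from diverging for small subarcs.
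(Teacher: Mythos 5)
Your proof is correct and rests on the same mechanism as the paper's: the paper also writes the dilation as a Poisson average of rotations, $f_t(z)=\int_0^{2\pi} f(ze^{i\theta})\,\frac{1-t^2}{|e^{i\theta}-t|^2}\,\frac{d\theta}{2\pi}$, and obtains the constant-one bound from convexity, rotation invariance of the seminorm, and the unit mass of the Poisson kernel, which is exactly your ``averaging operator'' argument. The only difference is cosmetic --- the paper applies Cauchy--Schwarz and Fubini to the interior seminorm $\sup_{a\in\D}(1-|a|^2)^{1-\lambda}\int_{\D}|f'(z)|^2(1-|\varphi_a(z)|^2)\,dm(z)$, whereas you apply Minkowski's integral inequality to the boundary oscillation --- and both versions share the same harmless slippage of proving the sharp inequality for an equivalent seminorm rather than for the Carleson-box expression by which $\|\cdot\|_{\mathcal{L}^{2,\lambda}}$ is literally defined, which is immaterial since every later use of the lemma needs only $\lesssim$.
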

\begin{proof}
If $f\in \mathcal{L}^{2,\lambda}(\D)$ and $0<t<1$, then, $f_t$ is analytic on the closed unit disk $\bar{\D}$. A simple computation shows that $f_t\in \mathcal{L}^{2,\lambda}_0 (\D)$.
 In addition, by Poisson formula, we have $$
f_t (z)=\int_0^{2\pi} f(ze^{i\theta})\frac{1-t^2}{|e^{i\theta}-t|^2}\frac{d\theta}{2\pi},~~~z\in\D.
$$
So,\begin{eqnarray*}
\lefteqn{ \sup_{a\in \D} (1-|a|^2)^{1-\lambda}\int_{\D} |f_t '(z)|^2(1-|\varphi_a (z)|^2)dm(z)   } \hspace*{20pt}\\
&\leq&     \sup_{a\in \D} (1-|a|^2)^{1-\lambda}\int_{\D} \int_0^{2\pi}|f'(ze^{i\theta})|^2\frac{1-t^2}{|e^{i\theta}-t|^2}\frac{d\theta}{2\pi}(1-|\varphi_a (z)|^2)dm(z) \\
&=&    \int_0^{2\pi}\sup_{a\in \D} (1-|a|^2)^{1-\lambda} \int_{\D} |f'(ze^{i\theta})|^2
(1-|\varphi_a (z)|^2)dm(z) \frac{1-t^2}{|e^{i\theta}-t|^2}\frac{d\theta}{2\pi}\\
&\leq& \sup_{a\in \D} (1-|a|^2)^{1-\lambda} \int_{\D} |f'(z)|^2
(1-|\varphi_a (z)|^2)dm(z) \cdot \int_0^{2\pi} \frac{1-t^2}{|e^{i\theta}-t|^2}\frac{d\theta}{2\pi}\\
&=&\sup_{a\in \D} (1-|a|^2)^{1-\lambda} \int_{\D} |f'(z)|^2
(1-|\varphi_a (z)|^2)dm(z).
\end{eqnarray*}
Thus, $\|f_t\|_{\mathcal{L}^{2,\lambda}}\leq \|f\|_{\mathcal{L}^{2,\lambda}}$.
\end{proof}

By Lemma $\ref{mbd}$  and standard arguments(see, e.g., \cite{coosoaf},Proposition 3.11), the following lemma follows.
\begin{lemma}
Assume that g is an analytic function on $\D$. Then  $T_g(or I_g):\mathcal{L}^{2,\lambda} \rightarrow B$ is compact if and only if $T_g(or I_g):\mathcal{L}^{2,\lambda} \rightarrow B$ is bounded, and for any bounded sequence $(f_k)_{k\in \mathbb{N}}$ in $\mathcal{L}^{2,\lambda}$ which converges to zero uniformly on $\D$ as $k\to \infty$, $\|T_g f_k\|_B \to 0 (or~\|I_g f_k\|_B \to 0)$ ~as $k\to \infty$.
\end{lemma}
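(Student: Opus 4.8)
The plan is to prove both implications of this standard compactness criterion, using the growth estimate of Lemma \ref{mbd} and Montel's theorem as the main engines. Throughout I will use that $(T_gf)'(z)=f(z)g'(z)$ and $(I_gf)'(z)=f'(z)g(z)$, that $T_gf(0)=I_gf(0)=0$, and the elementary fact that convergence $\|h_j-h\|_B\to0$ in the $B$-seminorm forces $h_j'\to h'$ uniformly on compact subsets of $\D$, since $\sup_{|z|\le r}|h_j'(z)-h'(z)|\le (1-r^2)^{-1}\|h_j-h\|_B$ for each $r<1$.

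For the direction compact $\Rightarrow$ sequential condition, boundedness is automatic, so only the limit statement needs work. Let $(f_k)$ be bounded in $\mathcal{L}^{2,\lambda}$ with $f_k\to0$ uniformly on $\D$ (the hypothesis enters below only through the weaker local form). Arguing by contradiction, suppose $\|T_gf_k\|_B\not\to0$; passing to a subsequence I may assume $\|T_gf_k\|_B\ge\varepsilon>0$. By compactness a further subsequence satisfies $T_gf_{k_j}\to h$ in $B$. The preliminary fact gives $(T_gf_{k_j})'=f_{k_j}\,g'\to h'$ locally uniformly; but $f_{k_j}\to0$ pointwise, so $h'\equiv0$, and since $h(0)=\lim_j T_gf_{k_j}(0)=0$ this forces $h=0$, contradicting $\|h\|_B\ge\varepsilon$. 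The argument for $I_g$ is identical, once I note that $f_k\to0$ locally uniformly implies $f_k'\to0$ locally uniformly by the Cauchy estimates, so that $(I_gf_{k_j})'=f_{k_j}'\,g\to0$.

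For the converse, assume $T_g$ is bounded and the sequential condition holds. I take an arbitrary sequence $(f_k)$ with $\|f_k\|_{\mathcal{L}^{2,\lambda}}\le1$ and aim to extract a subsequence whose image converges in $B$. By Lemma \ref{mbd} the family $(f_k)$ is uniformly bounded on each compact subset of $\D$, hence normal, so Montel's theorem yields a subsequence $f_{k_j}\to f$ uniformly on compact subsets, with $f\in H(\D)$. Applying Fatou's lemma to the area-integral characterization of the norm (part $(iii)$ of the equivalence lemma of Section~1) shows $\|f\|_{\mathcal{L}^{2,\lambda}}\le\liminf_j\|f_{k_j}\|_{\mathcal{L}^{2,\lambda}}\le1$, so $f\in\mathcal{L}^{2,\lambda}$. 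Then $h_j:=f_{k_j}-f$ is bounded in $\mathcal{L}^{2,\lambda}$ and tends to $0$ locally uniformly, so the hypothesis gives $\|T_gh_j\|_B\to0$; by linearity and $T_gh_j(0)=0$ this is exactly $T_gf_{k_j}\to T_gf$ in $B$. Hence every bounded sequence admits a subsequence with convergent image, which is compactness of $T_g$, and the $I_g$ case is word for word the same.

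The step I expect to be most delicate is the reconciliation of the convergence modes: the hypothesis is phrased with convergence \emph{uniformly on $\D$}, whereas Montel only furnishes convergence uniform on compact subsets. I would resolve this by observing that every estimate above uses only the local form, so the lemma should be read with local uniform convergence; alternatively one can interpose the dilations $f_t$ of Lemma \ref{yl5}, which approximate a given $\mathcal{L}^{2,\lambda}$ function in a controlled way and already lie in $\mathcal{L}^{2,\lambda}_0$, to bridge from local to global convergence. The second point requiring care is the lower semicontinuity of the Morrey norm under local uniform limits, where I must justify exchanging the limit with the supremum over $a\in\D$ (equivalently over arcs $I$) before applying Fatou to the inner integral; this is routine but is the one place where the specific integral form of the norm is essential.
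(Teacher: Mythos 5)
Your proof is correct and is exactly the ``standard argument'' the paper invokes: the paper's own proof consists only of citing Lemma \ref{mbd} together with Proposition 3.11 of \cite{coosoaf} (normal families via the growth estimate, plus the subsequence argument), which is precisely what you have written out in full. Your decision to read ``uniformly on $\D$'' as ``uniformly on compact subsets of $\D$'' is also the right reconciliation, since that is how the paper itself applies the lemma (e.g.\ in the proof of Lemma \ref{yl6}).
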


\begin{lemma}
\label{yl6}
Suppose that $0<\lambda<1$ and $p>1$. For $g\in B\mathcal{L}^{p,\lambda}$, define the following  operators $T_{g,r}:\mathcal{L}^{2,\lambda} \rightarrow B$ :$$
T_{g,r} f(z)=\int_0^z f(rw)g'(w)dw.
$$where $r\in (0,1)$. Then $T_{g,r}$ is compact.
\end{lemma}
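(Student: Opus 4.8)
The plan is to realize $T_{g,r}$ as a composition of the bounded operator $T_g$ with a compact dilation operator, so that compactness follows from the ideal property of compact operators. Writing $D_r f(z)=f(rz)$ for the dilation, one checks immediately that
$$T_g(D_r f)(z)=\int_0^z f(rw)g'(w)\,dw=T_{g,r}f(z),$$
so $T_{g,r}=T_g\circ D_r$. Since $g\in B\mathcal{L}^{p,\lambda}$, Theorem \ref{dl2} guarantees that $T_g:\mathcal{L}^{2,\lambda}\to B$ is bounded; hence it suffices to prove that $D_r:\mathcal{L}^{2,\lambda}\to\mathcal{L}^{2,\lambda}$ is compact, because the composition of a bounded operator with a compact one is compact. (Lemma \ref{yl5} already records that $D_r$ is bounded; the point is to upgrade this to compactness.)

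To prove $D_r$ compact I would argue by sequences. Let $(f_k)$ be a bounded sequence in $\mathcal{L}^{2,\lambda}$. By Lemma \ref{mbd} the family $\{f_k\}$ is uniformly bounded on compact subsets of $\D$, so Montel's theorem yields a subsequence, still denoted $(f_k)$, converging uniformly on compact subsets to some $f$, and lower semicontinuity of the norm places $f\in\mathcal{L}^{2,\lambda}$. Setting $h_k=f_k-f$, it remains to show $\|D_r h_k\|_{\mathcal{L}^{2,\lambda}}\to 0$, where $h_k\to 0$ uniformly on compact subsets of $\D$ while $(h_k)$ stays bounded in $\mathcal{L}^{2,\lambda}$.

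The main work is the estimate for $\|D_r h_k\|_{\mathcal{L}^{2,\lambda}}$, and the decisive feature is that the dilation confines all relevant values to the compact disk $\{\,|w|\le r\,\}$. Since $h_k\to 0$ uniformly on a slightly larger closed disk $\{\,|w|\le\rho\,\}$ with $r<\rho<1$, Cauchy's estimate gives $M_k:=\sup_{|w|\le r}|h_k'(w)|\to 0$. Because $(D_r h_k)'(z)=r\,h_k'(rz)$ and $(D_r h_k)(0)=h_k(0)\to 0$, the norm reduces to controlling
$$\sup_{a\in\D}(1-|a|^2)^{1-\lambda}\int_{\D} r^2|h_k'(rz)|^2(1-|\varphi_a(z)|^2)\,dm(z)\le r^2 M_k^2\sup_{a\in\D}(1-|a|^2)^{1-\lambda}\int_{\D}(1-|\varphi_a(z)|^2)\,dm(z).$$
Here $\int_{\D}(1-|\varphi_a(z)|^2)\,dm(z)=(1-|a|^2)\int_{\D}\frac{1-|z|^2}{|1-\overline{a}z|^2}\,dm(z)\lesssim 1-|a|^2$ by the standard estimate for such integrals, so the right-hand side is $\lesssim r^2 M_k^2\sup_{a\in\D}(1-|a|^2)^{2-\lambda}\le r^2 M_k^2\to 0$, using $2-\lambda>0$. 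Thus $\|D_r h_k\|_{\mathcal{L}^{2,\lambda}}\to 0$, so $D_r$ is compact and therefore $T_{g,r}$ is compact. The subtlety I expect to guard against is the uniformity in $a$: the weight $(1-|a|^2)^{1-\lambda}$ alone does not suppress the boundary, and compactness hinges on the integral being $O(1-|a|^2)$, so that the surplus factor $(1-|a|^2)^{2-\lambda}$ remains bounded uniformly in $a$.
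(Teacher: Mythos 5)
Your proposal is correct, but it takes a genuinely different route from the paper's. The paper works with $T_{g,r}$ directly: using its sequential compactness criterion (the lemma derived from Proposition 3.11 of \cite{coosoaf}), it takes a bounded sequence $f_n\to 0$ uniformly on compact subsets of $\D$, writes $\|T_{g,r}f_n\|_B=\sup_{z\in\D}(1-|z|^2)|f_n(rz)||g'(z)|$, and uses the pointwise Bloch bound $|g'(z)|\lesssim M(g)/(1-|z|^2)$ so that everything reduces to the values of $f_n$ on the compact disk $\{|w|\le r\}$, where they tend to zero uniformly. You instead factor $T_{g,r}=T_g\circ D_r$, get boundedness of $T_g$ from Theorem \ref{dl2}, and prove that the dilation $D_r$ is compact on $\mathcal{L}^{2,\lambda}$ via Montel's theorem, Cauchy estimates, and the standard estimate $\int_{\D}(1-|\varphi_a(z)|^2)\,dm(z)\lesssim 1-|a|^2$ (which indeed holds uniformly in $a$, by the same Zhu-type lemma the paper cites elsewhere); the ideal property of compact operators then finishes the argument. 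Both proofs exploit the same mechanism --- dilation confines the action to a compact subset where locally uniform convergence is uniform --- but yours is more structural: it isolates a reusable fact (compactness of $D_r$ on $\mathcal{L}^{2,\lambda}$) that yields compactness of $S\circ D_r$ for \emph{any} bounded operator $S$, and in particular transfers immediately to the little-space version in Lemma \ref{yl7}; the paper's argument is shorter, avoids all integral estimates, but leans on the sequential criterion, which you bypass by arguing from the definition of compactness. One small simplification available to you: the appeal to lower semicontinuity to place the Montel limit $f$ in $\mathcal{L}^{2,\lambda}$ (via Fatou and condition (iii) of Lemma 1.1) can be dropped entirely, since your final estimate applied to $h_{k,j}=f_k-f_j$ shows that $(D_r f_{k_j})$ is Cauchy in norm, which already suffices because the target is a Banach space.
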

\begin{proof}
Let $\{f_n\}$ be such that $\|f_n\|_{\mathcal{L}^{2,\lambda}}\leq 1 $ and $f_n\to 0$ uniformly on compact subsets of $\D$ as $n \to \infty$. We are required to show that $\displaystyle \lim_{n\rightarrow \infty} \|T_{g,r}f_n\|_B=0.$
In fact, Since $\|g\|_B\lesssim M(g)$, we have $|g'(z)|\lesssim \frac{M(g)}{1-|z|^2}$. From $\|f_n\|_{\mathcal{L}^{2,\lambda}}\leq 1 $ and Lemma $\ref{mbd}$ , it yields that $(1-|r|^2)^{(1-\lambda)/2} |f_n (rz)|\lesssim 1$.
Thus \begin{eqnarray*}
\|T_{g,r}f_n\|_B&=&\sup_{z\in \D} (1-|z|^2) |f_n (r z)||g'(z)|\\
&\lesssim& M(g) \sup_{z\in \D}  \frac{1}{(1-r^2)^{(1-\lambda)/2}}.
\end{eqnarray*}
Accordingly, by Dominated Convergence Theorem one reaches $\displaystyle \lim_{n\rightarrow \infty} \|T_{g_r}f_n\|_B=0.$

\end{proof}

Now, we present the  main result of this section.

\begin{theorem}
\label{dl3}
Suppose $ 0<\lambda<1$ and $g\in H(\D)$. If $I_g:\mathcal{L}^{2,\lambda} \rightarrow B$ is bounded, then$$
\|I_g\|_{e, \mathcal{L}^{2,\lambda}\to B} \thickapprox \|g\|_\frac{\lambda-1}{2}.$$
\end{theorem}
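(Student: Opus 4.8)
The plan is to sandwich the essential norm between two constant multiples of $\|g\|_{\frac{\lambda-1}{2}}$, with the upper bound essentially free and all the substance in the lower bound. For the upper estimate, the zero operator is compact, so $\|I_g\|_{e,\mathcal{L}^{2,\lambda}\to B}\le\|I_g\|$ directly from the definition of the essential norm; since Theorem \ref{3dl1} identifies $\|I_g\|\thickapprox\|g\|_{\frac{\lambda-1}{2}}$, this already gives $\|I_g\|_{e,\mathcal{L}^{2,\lambda}\to B}\lesssim\|g\|_{\frac{\lambda-1}{2}}$. Thus everything reduces to proving the reverse inequality $\|I_g\|_{e,\mathcal{L}^{2,\lambda}\to B}\gtrsim\|g\|_{\frac{\lambda-1}{2}}$.

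For the lower bound I would test against the family $f_b$ of Lemma \ref{314}, which satisfies $\|f_b\|_{\mathcal{L}^{2,\lambda}}\lesssim 1$. Since $f_b(z)=(1-|b|^2)^{\frac{1-\lambda}{2}}(\rho_b(z)-b)$ with $|\rho_b(z)-b|\le 2$, the prefactor $(1-|b|^2)^{\frac{1-\lambda}{2}}\to 0$ forces $f_b\to 0$ uniformly on compact subsets of $\D$ as $|b|\to 1$. A norm-bounded sequence in $\mathcal{L}^{2,\lambda}$ tending to $0$ uniformly on compacta converges weakly to $0$, so any compact operator $K:\mathcal{L}^{2,\lambda}\to B$ sends it to a norm-null sequence, i.e. $\|Kf_b\|_B\to 0$ as $|b|\to1$. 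Consequently, for every compact $K$,
\begin{eqnarray*}
\|I_g-K\| &\ge& \limsup_{|b|\to1}\frac{\|(I_g-K)f_b\|_B}{\|f_b\|_{\mathcal{L}^{2,\lambda}}}\\
&\gtrsim& \limsup_{|b|\to1}\big(\|I_gf_b\|_B-\|Kf_b\|_B\big)=\limsup_{|b|\to1}\|I_gf_b\|_B.
\end{eqnarray*}
The chain of estimates already carried out in $(\ref{31})$ gives $\|I_gf_b\|_B\gtrsim |g(b)|(1-|b|^2)^{\frac{\lambda-1}{2}}$, so taking the infimum over all compact $K$ yields $\|I_g\|_{e,\mathcal{L}^{2,\lambda}\to B}\gtrsim\limsup_{|b|\to1}|g(b)|(1-|b|^2)^{\frac{\lambda-1}{2}}$.

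The decisive, and I expect hardest, step is to upgrade this boundary $\limsup$ to the full supremum defining $\|g\|_{\frac{\lambda-1}{2}}=\sup_{b\in\D}|g(b)|(1-|b|^2)^{\frac{\lambda-1}{2}}$. The obstacle is structural: weakly-null test functions only probe $g$ near $\partial\D$, whereas the norm is a supremum over all of $\D$, so one must show the extremal behaviour of $|g(b)|(1-|b|^2)^{\frac{\lambda-1}{2}}$ is attained in the limit $|b|\to1$. Here I would exploit that boundedness of $I_g$ forces $g\in H^\infty_{\frac{\lambda-1}{2}}$, hence $|g(b)|\lesssim(1-|b|^2)^{\frac{1-\lambda}{2}}$, together with the fact that the weight $(1-|b|^2)^{\frac{\lambda-1}{2}}$ grows toward the boundary; a maximum-modulus / normal-families comparison of $|g(b)|$ against this weight then transfers any interior near-extremiser out to $\partial\D$ without loss. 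Establishing $\limsup_{|b|\to1}|g(b)|(1-|b|^2)^{\frac{\lambda-1}{2}}\gtrsim\|g\|_{\frac{\lambda-1}{2}}$ closes the loop and, together with the easy upper bound, gives $\|I_g\|_{e,\mathcal{L}^{2,\lambda}\to B}\thickapprox\|g\|_{\frac{\lambda-1}{2}}$.
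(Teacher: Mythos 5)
Your proposal is correct and follows essentially the same route as the paper: the upper bound comes from the zero (compact) operator together with Theorem \ref{3dl1}, and the lower bound from testing against the family $f_b$ of Lemma \ref{314}, which is bounded in $\mathcal{L}^{2,\lambda}$, tends to zero uniformly on compacta (so compact operators annihilate it in the limit), and satisfies $\|I_g f_b\|_B\gtrsim |g(b)|(1-|b|^2)^{\frac{\lambda-1}{2}}$ by the chain (\ref{31}). The one place you diverge is actually a point in your favor: the paper passes from the boundary quantity to the full norm with only the phrase ``the arbitrary choice of the sequence $\{b_n\}$ implies $\|I_g\|_{e}\gtrsim\|g\|_{\frac{\lambda-1}{2}}$'', which taken literally yields only $\limsup_{|b|\to1}|g(b)|(1-|b|^2)^{\frac{\lambda-1}{2}}$, whereas you explicitly flag the limsup-versus-supremum issue and sketch the right fix. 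Your sketch does work, and more simply than you suggest: since $\frac{\lambda-1}{2}<0$, the weight $(1-r^2)^{\frac{\lambda-1}{2}}$ is increasing in $r$, and by the maximum modulus principle, for any $b_0\in\D$ and any $r$ with $|b_0|<r<1$ there is $b_r$ with $|b_r|=r$ and $|g(b_r)|\ge|g(b_0)|$, hence $|g(b_r)|(1-r^2)^{\frac{\lambda-1}{2}}\ge|g(b_0)|(1-|b_0|^2)^{\frac{\lambda-1}{2}}$; letting $r\to1$ gives $\limsup_{|b|\to1}|g(b)|(1-|b|^2)^{\frac{\lambda-1}{2}}=\sup_{b\in\D}|g(b)|(1-|b|^2)^{\frac{\lambda-1}{2}}$ with no constant lost (neither normal families nor the a priori bound $g\in H^\infty_{\frac{\lambda-1}{2}}$ is needed). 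With that two-line argument written out, your proof is complete, and it in fact supplies the justification that the paper's own proof leaves implicit.
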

\begin{proof}
Choose the zero operator $O:\mathcal{L}^{2,\lambda} \rightarrow B:f\mapsto 0$. Since $O$ is compact and $\|O\|=0$, we  get $$
\|I_g\|_{e, \mathcal{L}^{2,\lambda}\to B} =\inf_K \|I_g-K\|\leq\|I_g\|\lesssim \|g\|_\frac{\lambda-1}{2}.$$

Conversely, choose the sequence $\{b_n\}\subset \D$ such that $|b_n|\rightarrow 1$ as $n\rightarrow \infty$. Considering the sequence of functions $f_n (z)=(1-|b_n|^2)^{\frac{1-\lambda}{2}}(\rho_{b_n} (z)-b_n)$, we obtain $\|f_n\|_{\mathcal{L}^{2,\lambda}}\lesssim 1$ by Lemma \ref{314}.
By easy calculation, $f_n (z)=-(1-|b_n|^2)^{\frac{1+\lambda}{2}} \int_{0}^z \frac{dw}{(1-\overline{b_n}z)^2}$, and thus $f_n$ converges to zero uniformly on compact subsets of $\D$. Then $\|Kf_n\|_B\rightarrow 0$ as $n\rightarrow \infty$  for any compact operator $K$. So
$$
\|I_g-K\|\gtrsim \limsup_{n\rightarrow\infty} \|(I_g-K)f_n\|_B
\geq\limsup_{n\rightarrow\infty} (\|I_g f_n\|_B-\|Kf_n\|_B)
\geq \limsup_{n\rightarrow\infty} \|I_g f_n\|_B.
$$
By $(\ref{31})$, we have $$\|I_g-K\|\gtrsim \limsup_{n\rightarrow\infty} |\frac{g(b_n)}{(1-|b_n|^2)^{\frac{1-\lambda}{2}}}|.$$
The arbitrary choice of the sequence $\{b_n\}$ implies $$
\|I_g\|_{e, \mathcal{L}^{2,\lambda}\to B} \gtrsim \|g\|_\frac{\lambda-1}{2}.$$
\end{proof}

\begin{theorem}
\label{dl4}
 Suppose $0<\lambda<1$ and $g\in H(\D)$. If $T_g:\mathcal{L}^{2,\lambda} \rightarrow B$ is bounded, then $$
\|T_g\|_{e, \mathcal{L}^{2,\lambda}\to B} \thickapprox \limsup_{|a|\rightarrow 1} \big( \int_{\D}(\frac{1-|a|^2}{|1-\overline{a}z|^2})^{p+1-\lambda} |g'(z)|^2(1-|z|^2)^{p}dm(z)\big)^{1/2}.$$
\end{theorem}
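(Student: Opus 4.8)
The plan is to prove the two matching estimates
$$\|T_g\|_{e,\mathcal{L}^{2,\lambda}\to B}\lesssim L,\qquad \|T_g\|_{e,\mathcal{L}^{2,\lambda}\to B}\gtrsim L,$$
where $L$ denotes the right-hand side and $p>1$ is fixed (its value being immaterial, as in Corollary \ref{tl1}). Write $\Phi_a(g)=\int_{\D}\big(\frac{1-|a|^2}{|1-\overline{a}z|^2}\big)^{p+1-\lambda}|g'(z)|^2(1-|z|^2)^p\,dm(z)$, so that $L=\limsup_{|a|\to1}\Phi_a(g)^{1/2}$. Since $T_g$ is bounded, Theorem \ref{dl2} together with the integral form of $\|\cdot\|_{B\mathcal{L}^{p,\lambda}}$ gives $\|T_g\|\thickapprox M(g)\thickapprox\sup_a\Phi_a(g)^{1/2}$, so $L$ is exactly the boundary ($\limsup$) analogue of the norm, and the whole argument is the essential-norm refinement of the proof of Theorem \ref{dl2}. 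I will repeatedly use $(T_gf)'=fg'$ together with the two descriptions of the Bloch norm in Proposition \ref{mt2}.

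For the upper bound I approximate $T_g$ by the compact operators $T_{g,r}$ of Lemma \ref{yl6}, so that $\|T_g\|_e\le\liminf_{r\to1}\|T_g-T_{g,r}\|$. Because $[(T_g-T_{g,r})f]'(z)=[f(z)-f(rz)]g'(z)$, I must bound, uniformly over $\|f\|_{\mathcal{L}^{2,\lambda}}\le1$, the quantity $\frac{1}{|I|^p}\int_{S(I)}|f(z)-f(rz)|^2|g'(z)|^2(1-|z|^2)^p\,dm(z)$. Fix $\eta\in(0,1)$ and use the dyadic decomposition $S(I)=\bigcup_n\bigcup_{J\in\mathcal{D}_n(I)}T(J)$ from \eqref{33}. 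On the top boxes $T(J)$ with $|J|\ge\eta$, which lie in the compact set $\{|z|\le 1-\eta/2\}$, the unit ball of $\mathcal{L}^{2,\lambda}$ is a normal family by Lemma \ref{mbd}, hence equicontinuous, so $\sup_{\|f\|\le1}\sup_{|z|\le1-\eta/2}|f(z)-f(rz)|\to0$ as $r\to1$; this part contributes at most $o_r(1)\,M(g)^2$. On the top boxes with $|J|<\eta$ I use $1-|z|\thickapprox|J|$ and Lemma \ref{mbd} to replace $|f(z)-f(rz)|^2$ by $|J|^{\lambda-1}$, and then run verbatim the geometric summation of \eqref{33}, but with $M(g)$ replaced by the small-box quantity $M_\eta(g):=\sup_{|J|<\eta}\big(\frac{1}{|J|^{p-\lambda+1}}\int_{S(J)}|g'|^2(1-|z|^2)^p\,dm\big)^{1/2}$; this part contributes $\lesssim M_\eta(g)^2$. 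Hence $\|T_g-T_{g,r}\|^2\lesssim o_r(1)+M_\eta(g)^2$, and letting first $r\to1$ and then $\eta\to0$ gives $\|T_g\|_e\lesssim M_0(g):=\lim_{\eta\to0}M_\eta(g)$.

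For the lower bound, choose $a_n\in\D$ with $|a_n|\to1$ and $\Phi_{a_n}(g)^{1/2}\to L$, and test on the reproducing-kernel functions $h_n(z)=(1-|a_n|^2)^{(1-\lambda)/2}(1-\overline{a_n}z)^{-(1-\lambda)}$, which satisfy $\|h_n\|_{\mathcal{L}^{2,\lambda}}\lesssim1$ (a standard normalized-kernel estimate) and tend to $0$ uniformly on compact subsets of $\D$. Exactly as in Theorem \ref{dl3}, $\|Kh_n\|_B\to0$ for every compact $K$, so $\|T_g-K\|\gtrsim\limsup_n\|(T_g-K)h_n\|_B\ge\limsup_n\|T_gh_n\|_B$. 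The point of this particular $h_n$ is the pointwise identity $|h_n(z)|^2(1-|\varphi_{a_n}(z)|^2)^p=\big(\frac{1-|a_n|^2}{|1-\overline{a_n}z|^2}\big)^{p+1-\lambda}(1-|z|^2)^p$, i.e. the integrand of $\Phi_{a_n}(g)$; choosing $a=a_n$ in the supremum of Proposition \ref{mt2} therefore gives $\|T_gh_n\|_B^2\gtrsim\Phi_{a_n}(g)$, whence $\|T_g\|_e\gtrsim\limsup_n\Phi_{a_n}(g)^{1/2}=L$. Finally the upper bound is matched to $L$ by the easy inequality $M_0(g)\lesssim L$: taking $a=(1-|I|)\zeta$ (the Carleson point of $I$) makes $\frac{1-|a|^2}{|1-\overline{a}z|^2}\thickapprox|I|^{-1}$ on $S(I)$, so $\Phi_a(g)\gtrsim\frac{1}{|I|^{p-\lambda+1}}\int_{S(I)}|g'|^2(1-|z|^2)^p\,dm$ with $|a|\to1$ as $|I|\to0$.

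The main obstacle is the upper bound. One must carry out the dyadic estimate of \eqref{33} \emph{uniformly} over the unit ball of $\mathcal{L}^{2,\lambda}$ while keeping apart two different mechanisms: on the interior top boxes the term is killed by equicontinuity of the normal family (not by any smallness of $g$), whereas on the boundary top boxes only the near-boundary mass $M_\eta(g)$ may be used. The difficulty is that a single Carleson box $S(I)$ with $|I|$ large still reaches $\partial\D$, so one cannot simply split boxes by size; the dyadic top-box decomposition is precisely what disentangles the interior from the boundary, and verifying that the geometric summation survives the replacement of $M(g)$ by $M_\eta(g)$ is the technical heart of the proof.
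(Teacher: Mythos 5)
Your proof is correct, and it shares the paper's overall skeleton---upper bound via the compact dilation operators $T_{g,r}$ of Lemma \ref{yl6}, lower bound via a bounded, weakly-null family of test functions---but both halves are executed along genuinely different lines. For the upper bound the paper never decomposes Carleson boxes: it stays with the M\"obius-invariant form of the Bloch norm in Proposition \ref{mt2}, splits the supremum over the parameter $a$ into $|a|\le s$ and $|a|>s$, kills the first piece by dominated convergence and, on the second, uses Lemma \ref{mbd} to replace $|f(z)-f(r_nz)|^2$ by $(1-|z|^2)^{\lambda-1}$, which turns the integrand directly into that of $\Phi_a(g)$; letting $n\to\infty$ and then $s\to 1$ gives $\|T_g\|_e\lesssim L$ with no conversion step. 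Your dyadic-box route (big boxes handled by equicontinuity, small boxes by $M_\eta(g)$, then the one-sided inequality $M_0(g)\lesssim L$, which you do prove) is longer but actually more watertight: the paper's dominated-convergence step for $K_1$ is taken \emph{inside} a supremum over the unit ball of $\mathcal{L}^{2,\lambda}$, and making that uniform requires exactly the normal-family/equicontinuity argument you spell out, together with a uniform tail estimate; the paper glosses this. For the lower bound the paper tests on $F_{b_n}$ from Lemma \ref{314} attached to arcs with $|I_n|\to 0$, obtains the Carleson-box limsup, and then converts to $L$ by the \cite{radsjx}-type equivalence of the two seminorms (asserted, not proved); your functions $h_n(z)=(1-|a_n|^2)^{(1-\lambda)/2}(1-\overline{a_n}z)^{-(1-\lambda)}$ exploit the exact identity $|h_n(z)|^2(1-|\p_{a_n}(z)|^2)^p=\big(\frac{1-|a_n|^2}{|1-\overline{a_n}z|^2}\big)^{p+1-\lambda}(1-|z|^2)^p$ to reach $\Phi_{a_n}(g)$ directly, with no boxes and no conversion. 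The one step you owe is the bound $\|h_n\|_{\mathcal{L}^{2,\lambda}}\lesssim 1$, which you assert as ``standard'': it is true, but it is \emph{not} supplied by the paper (your $h_n$ is neither $f_b$ nor $F_b$ of Lemma \ref{314}), so it must be checked by the same kind of Carleson-measure computation that proves Lemma 4 of \cite{pljlzl}, namely $\sup_I|I|^{-\lambda}\int_{S(I)}|h_n'(z)|^2(1-|z|^2)\,dm(z)\lesssim 1$, splitting according to the position of $S(I)$ relative to $a_n$ (the computation goes through for all $0<\lambda<1$, with a harmless case distinction at $\lambda=1/2$). With that verification written out, your proof is complete and, on the upper-bound side, somewhat more rigorous than the paper's.
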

\begin{proof}
For any $r_n\in(0,1)$  such that $r_n\rightarrow 1$ as $n\rightarrow \infty$, we introduce $T_{g,r_n}:\mathcal{L}^{2,\lambda} \rightarrow B$ which is compact. Let $s\in (0,1)$ , we have
\begin{eqnarray*}
\lefteqn {  \|T_g\|_{e, \mathcal{L}^{2,\lambda}\to B}}\hspace{30pt} \\
&\leq& \|T_g-T_{g,r_n}\| \\
&\approx&\sup_{\|f\|_{\mathcal{L}^{2,\lambda}}= 1}  \|T_g-T_{g,r_n}\|_B \\
&=&\sup_{\|f\|_{\mathcal{L}^{2,\lambda}}= 1}   \sup_{a\in \D} \big(\int_{\D} |f(z)-f(r_n z)|^2|g'(z)|^2(1-|\varphi_a (z)|^2)^{p+1-\lambda } dm(z)\big)^{1/2}\\
&\leq&  \sup_{\|f\|_{\mathcal{L}^{2,\lambda}}= 1}   \sup_{|a|\leq s} \big(\int_{\D} |f(z)-f(r_n z)|^2|g'(z)|^2(1-|\varphi_a (z)|^2)^{p+1-\lambda } dm(z)\big)^{1/2}\\
&~~+&\sup_{\|f\|_{\mathcal{L}^{2,\lambda}}= 1}   \sup_{|a|>s} \big(\int_{\D} |f(z)-f(r_n z)|^2|g'(z)|^2(1-|\varphi_a (z)|^2)^{p+1-\lambda } dm(z)\big)^{1/2}\\
&\leq&  \sup_{\|f\|_{\mathcal{L}^{2,\lambda}}= 1}   \sup_{|a|\leq s} \big(\int_{\D} |f(z)-f(r_n z)|^2|g'(z)|^2(1-|\varphi_a (z)|^2)^{p+1-\lambda } dm(z)\big)^{1/2}\\
&~~+& 2  \sup_{|a|>s} \int_{\D}(\frac{1-|a|^2}{|1-\overline{a}z|^2})^{p+1-\lambda} |g'(z)|^2(1-|z|^2)^{p}dm(z)\big)^{1/2}\\
&\triangleq & K_1+K_2.
\end{eqnarray*}
Since $|a|\leq s$ is a closed set of $\D$ and $g\in B\mathcal{L}^{p,\lambda}$, the Dominated Convergence Therorem yields $K_1\rightarrow 0$ as $n\rightarrow \infty$.

Now, letting $n\rightarrow \infty$ and then letting $s\rightarrow 1$, we get
$$\|T_g\|_{e, \mathcal{L}^{2,\lambda}\to B}\lesssim  \limsup_{|a|\rightarrow 1} \big( \int_{\D}(\frac{1-|a|^2}{|1-\overline{a}z|^2})^{p+1-\lambda} |g'(z)|^2(1-|z|^2)^{p}dm(z)\big)^{1/2}.$$

Conversely, Let $I_n \subset \partial\mathbbm{D}$ such that  $|I_n|\rightarrow 0$ as $n\rightarrow \infty$. $\zeta_n$ is the center of arc $I$  and $b_n=(1-|I_n|)\zeta$, so
$$(1-|b_n|^2)\approx |1-\overline{b_n}z| \approx |I_n|, ~~z\in S(I_n).$$
Consider the function $F_n (z)=(1-|b_n|^2)(1-\overline{b_n}z)^{\frac{\lambda-3}{2}}$. Then $\|F_n\|_{\mathcal{L}^{2,\lambda}} \lesssim 1$ and $F_n\to 0$ uniformly on the compact subsets of $\D$ as $n\rightarrow \infty$ by Lemma $\ref{314}$ . Thus $\|KF_n\|_B\rightarrow 0$ for any compact operator $K$.
Therefore
\begin{eqnarray*}
\|T_g-K\|\gtrsim \limsup_{n\rightarrow\infty} \|(T_g-K)F_n\|_B
\geq\limsup_{n\rightarrow\infty} (\|T_g F_n\|_B-\|KF_n\|_B)
\geq \limsup_{n\rightarrow\infty} \|T_g f_n\|_B\\
\gtrsim \limsup_{n\rightarrow\infty} \big(\frac{1}{|I_n|^p} \int_{S(I_n)} |F_n(z)|^2|g'(z)|^2(1-|z|^2)^{p}dm(z)\big)^{1/2}\\
\thickapprox \limsup_{n\rightarrow\infty} \big(\frac{1}{|I_n|^{p+1-\lambda}} \int_{S(I_n)} |g'(z)|^2(1-|z|^2)^{p}dm(z)\big)^{1/2}
.~~~
\end{eqnarray*}
Since the sequence $\{I_n\}$ is arbitrary, we conclude
\begin{eqnarray*}
\|T_g\|_{e, \mathcal{L}^{2,\lambda}\to B} &\gtrsim &\limsup_{|I|\rightarrow0} \big(\frac{1}{|I|^{p+1-\lambda}} \int_{S(I)} |g'(z)|^2(1-|z|^2)^{p}dm(z)\big)^{1/2}\\
&\thickapprox& \limsup_{|a|\rightarrow 1} \big( \int_{\D}(\frac{1-|a|^2}{|1-\overline{a}z|^2})^{p+1-\lambda} |g'(z)|^2(1-|z|^2)^{p}dm(z)\big)^{1/2}.
\end{eqnarray*}
This completes the proof.
\end{proof}

We have the following corollary about their compactness.
\begin{corollary}
Suppose that $0<\lambda<1$ and $p>1$. Then

$(i)$~$I_g:\mathcal{L}^{2,\lambda} \rightarrow B$ is compact if and only if $g=0$.

$(ii)$~$T_g:\mathcal{L}^{2,\lambda} \rightarrow B$ is compact if and only if $g\in B\mathcal{L}^{p,\lambda}_0$.
\end{corollary}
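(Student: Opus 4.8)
The plan is to read off both assertions directly from the essential-norm identities established in Theorems~\ref{dl3} and~\ref{dl4}, using the elementary fact that a bounded linear operator between Banach spaces is compact precisely when its essential norm is zero. So in each case I would first secure boundedness, then set the corresponding essential norm equal to zero and interpret what that forces on $g$.

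For part $(i)$, observe that if $I_g$ is compact then it is in particular bounded, so Theorem~\ref{3dl1} gives $g\in H^\infty_{\frac{\lambda-1}{2}}$ and Theorem~\ref{dl3} applies, yielding $\|I_g\|_{e,\mathcal{L}^{2,\lambda}\to B}\thickapprox\|g\|_{\frac{\lambda-1}{2}}$. Compactness forces the essential norm to vanish, hence $\|g\|_{\frac{\lambda-1}{2}}=\sup_{z\in\D}(1-|z|^2)^{\frac{\lambda-1}{2}}|g(z)|=0$. Since $0<\lambda<1$ the exponent $\frac{\lambda-1}{2}$ is negative, so the weight $(1-|z|^2)^{\frac{\lambda-1}{2}}$ is strictly positive on $\D$; therefore the supremum can vanish only if $g\equiv 0$. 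The converse is immediate: if $g=0$ then $I_g$ is the zero operator, which is compact.

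For part $(ii)$ I would argue both implications through Theorem~\ref{dl4}, taking care to certify boundedness so that the theorem is applicable. If $T_g$ is compact it is bounded, so Theorem~\ref{dl2} gives $g\in B\mathcal{L}^{p,\lambda}$ and Theorem~\ref{dl4} yields that $\|T_g\|_{e,\mathcal{L}^{2,\lambda}\to B}$ is comparable to the quantity $\limsup_{|a|\rightarrow1}\big(\int_{\D}(\frac{1-|a|^2}{|1-\overline{a}z|^2})^{p+1-\lambda}|g'(z)|^2(1-|z|^2)^{p}\,dm(z)\big)^{1/2}$; compactness makes this limsup vanish. Invoking the comparability between the $a$-formulation and the Carleson-box formulation of the $B\mathcal{L}^{p,\lambda}$ seminorm (from \cite{radsjx}), which was already used at the end of the proof of Theorem~\ref{dl4}, the vanishing of that limsup is equivalent to $\lim_{|I|\rightarrow0}\big(\frac{1}{|I|^{p-\lambda+1}}\int_{S(I)}|g'(z)|^2(1-|z|^2)^{p}\,dm(z)\big)^{1/2}=0$, i.e.\ $g\in B\mathcal{L}^{p,\lambda}_0$. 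Conversely, if $g\in B\mathcal{L}^{p,\lambda}_0\subset B\mathcal{L}^{p,\lambda}$, then Theorem~\ref{dl2} shows $T_g$ is bounded, and the same comparability turns the defining condition of $B\mathcal{L}^{p,\lambda}_0$ into the statement that the limsup in Theorem~\ref{dl4} is zero; hence $\|T_g\|_{e,\mathcal{L}^{2,\lambda}\to B}=0$ and $T_g$ is compact.

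There is no genuine analytic obstacle here, as the corollary is a formal consequence of the already-established essential-norm identities. The only points demanding care are bookkeeping ones: in each direction one must first verify boundedness (directly from compactness, or from membership in the relevant space via Theorems~\ref{3dl1} and~\ref{dl2}) before the essential-norm formula may be invoked, and for part $(ii)$ one must pass between the two equivalent expressions of the $B\mathcal{L}^{p,\lambda}$ seminorm so that the limit condition in Theorem~\ref{dl4} matches the definition of $B\mathcal{L}^{p,\lambda}_0$.
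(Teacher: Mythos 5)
Your proof is correct and follows exactly the route the paper intends: the paper states this corollary without proof as an immediate consequence of Theorems~\ref{dl3} and~\ref{dl4}, namely that compactness is equivalent to vanishing essential norm, combined with the essential-norm formulas (and, for part $(ii)$, the equivalence of the $a$-formulation and Carleson-box formulation already invoked at the end of the proof of Theorem~\ref{dl4}). Your write-up simply makes explicit the bookkeeping (boundedness before applying the essential-norm theorems, positivity of the weight in part $(i)$) that the paper leaves to the reader.
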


\section{Boundedness and essential norm of $I_g$ and $T_g$ from $\mathcal{L}^{2,\lambda}_0$ to $B_0$ }

\begin{theorem}\label{dl5}
Let $0<\lambda<1$ and $g\in H(\D)$. Then $I_g:\mathcal{L}^{2,\lambda}_0 \rightarrow B_0$ is bounded if and only if
$g\in H_{ \frac{\lambda-1}{2} }^\infty.$ Moreover, $$
\|I_g\| \thickapprox \|g\|_\frac{\lambda-1}{2}.$$
\end{theorem}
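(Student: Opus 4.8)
The plan is to deduce the little-space statement from the big-space Theorem \ref{3dl1}, adding only the verification that the range of $I_g$ actually lands in $B_0$. Throughout write $\alpha=\frac{\lambda-1}{2}$ and recall that, since $\alpha<0$, membership $g\in H^\infty_\alpha$ is equivalent to the decay estimate $|g(z)|\le\|g\|_\alpha(1-|z|^2)^{\frac{1-\lambda}{2}}$, $z\in\D$. Notice also that $I_g$ acts on $\mathcal{L}^{2,\lambda}_0\subset\mathcal{L}^{2,\lambda}$ and that $B_0$ carries the norm inherited from $B$, so the operator norm on the little spaces is dominated by the big-space norm on the relevant test objects.

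For the necessity I would argue exactly as in the necessity half of Theorem \ref{3dl1}. Suppose $I_g\colon\mathcal{L}^{2,\lambda}_0\to B_0$ is bounded. By Lemma \ref{314} the test functions $f_b(z)=(1-|b|^2)^{\frac{1-\lambda}{2}}(\varphi_b(z)-b)$ lie in $\mathcal{L}^{2,\lambda}_0$ with $\|f_b\|_{\mathcal{L}^{2,\lambda}}\lesssim1$, so they are admissible competitors for the operator norm on the little space. Feeding them into the operator and running the same chain of estimates as in $(\ref{31})$ gives $\|I_g\|\gtrsim|g(b)|(1-|b|^2)^{-\frac{1-\lambda}{2}}$ for every $b\in\D$; taking the supremum over $b$ yields $\|I_g\|\gtrsim\|g\|_\alpha$, and in particular $g\in H^\infty_\alpha$. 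The only new input here is that $f_b$ already lives in the smaller space, which Lemma \ref{314} records.

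For the sufficiency, assume $g\in H^\infty_\alpha$. Boundedness into the big space is free: Theorem \ref{3dl1} gives $I_g\colon\mathcal{L}^{2,\lambda}\to B$ bounded with $\|I_g\|\lesssim\|g\|_\alpha$, and restricting to the subspace $\mathcal{L}^{2,\lambda}_0$ preserves this bound. The real point is to show $I_gf\in B_0$ whenever $f\in\mathcal{L}^{2,\lambda}_0$, i.e. $(1-|z|^2)|f'(z)g(z)|\to0$ as $|z|\to1$. I would first upgrade the growth estimate of Lemma \ref{mbd} to a little-oh statement for the derivative: for $f\in\mathcal{L}^{2,\lambda}_0$ one has $(1-|z|^2)^{\frac{3-\lambda}{2}}|f'(z)|\to0$. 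This comes from the sub-mean-value inequality for $|f'|^2$ over a fixed hyperbolic disc $D(a,\tfrac12)$, on which $1-|z|^2\approx1-|a|^2$ and $1-|\varphi_a(z)|^2\approx1$: it bounds $(1-|a|^2)^2|f'(a)|^2$ by $\int_\D|f'(z)|^2(1-|\varphi_a(z)|^2)\,dm(z)=(1-|a|^2)^{\lambda-1}E(a)$, where $E(a)=(1-|a|^2)^{1-\lambda}\int_\D|f'(z)|^2(1-|\varphi_a(z)|^2)\,dm(z)\to0$ as $|a|\to1$ by the little-oh analogue of Lemma 1.1(iii) characterizing $\mathcal{L}^{2,\lambda}_0$. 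Hence $|f'(a)|^2\lesssim(1-|a|^2)^{\lambda-3}E(a)$, which is the claimed decay. Combining it with $|g(z)|\le\|g\|_\alpha(1-|z|^2)^{\frac{1-\lambda}{2}}$ makes the exponents cancel exactly, leaving $(1-|z|^2)|f'(z)g(z)|\le\|g\|_\alpha(1-|z|^2)^{\frac{3-\lambda}{2}}|f'(z)|\to0$. Thus $I_gf\in B_0$ and $\|I_g\|_{\mathcal{L}^{2,\lambda}_0\to B_0}\lesssim\|g\|_\alpha$; together with necessity this gives $\|I_g\|\thickapprox\|g\|_\alpha$.

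The main obstacle is precisely this containment $I_g(\mathcal{L}^{2,\lambda}_0)\subset B_0$: the crude pointwise bounds that suffice for boundedness only give $(1-|z|^2)^{\frac{3-\lambda}{2}}|f'(z)|\lesssim\|f\|_{\mathcal{L}^{2,\lambda}}$, which makes the product merely $O(1)$ rather than $o(1)$, so one must extract genuine vanishing from the ``little'' hypothesis on $f$ through the subharmonicity step and the decay of the localized energy $E(a)$. An alternative route, avoiding the derivative estimate, is to verify $I_gf\in B_0$ first for $f$ analytic on $\overline{\D}$ (where it is immediate from the decay of $g$ and boundedness of $f'$) and then pass to the limit using density of such $f$ in $\mathcal{L}^{2,\lambda}_0$ — for instance via the dilates $f_t$ of Lemma \ref{yl5} — together with the closedness of $B_0$ in $B$; there the difficulty migrates to justifying the norm convergence $f_t\to f$ in $\mathcal{L}^{2,\lambda}$.
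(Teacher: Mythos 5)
Your proof is correct, and its two halves relate to the paper's proof differently. The necessity half is essentially identical to the paper's: both you and the paper feed the test functions $f_b\in\mathcal{L}^{2,\lambda}_0$ of Lemma \ref{314} into the chain of estimates $(\ref{31})$, use that $B_0$ carries the norm inherited from $B$, and take the supremum over $b$ to get $\|I_g\|\gtrsim\|g\|_{\frac{\lambda-1}{2}}$. The sufficiency half is where you genuinely diverge. The paper stays at the integral level: it expresses the Bloch norm through the $Q_{2-\lambda}$-type quantity $\int_\D |h'(z)|^2(1-|\varphi_a(z)|^2)^{2-\lambda}dm(z)$, pulls the pointwise bound on $|g|$ out of the integral, and invokes the little-oh analogue of Lemma 1.1(iii) to show this quantity tends to $0$ as $|a|\to 1$ for $h=I_gf$ with $f\in\mathcal{L}^{2,\lambda}_0$; concluding $I_gf\in B_0$ from that vanishing tacitly uses the identification of the little $Q_p$ space (here $p=2-\lambda>1$) with $B_0$. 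You instead verify the defining condition of $B_0$ pointwise: the sub-mean-value inequality on a pseudohyperbolic disc converts the same vanishing energy $E(a)$ into the derivative decay $(1-|a|^2)^{\frac{3-\lambda}{2}}|f'(a)|\lesssim E(a)^{1/2}\to 0$, which multiplied against $|g(z)|\le\|g\|_{\frac{\lambda-1}{2}}(1-|z|^2)^{\frac{1-\lambda}{2}}$ cancels exponents exactly and gives $(1-|z|^2)|(I_gf)'(z)|\to 0$ directly. Both treatments rest on the same ingredient that neither proves, namely the little-oh version of characterization (iii) of $\mathcal{L}^{2,\lambda}_0$; your route additionally needs the standard subharmonicity estimate (essentially Lemma 4.12 of \cite{otifs}, which the paper already cites in the necessity half), but in exchange it avoids the little-$Q_p$-equals-little-Bloch fact, so the containment $I_g(\mathcal{L}^{2,\lambda}_0)\subset B_0$ is established more elementarily and without ever leaving the defining seminorm of $B_0$. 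The paper's argument is shorter if one grants the $Q_p$ machinery; yours is more self-contained, and your exponent bookkeeping is correct throughout.
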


\begin{proof}Necessity: assume that $I_g:\mathcal{L}^{2,\lambda}_0 \rightarrow B_0$ is bounded. Then it is clear that $I_g:\mathcal{L}^{2,\lambda}_0 \rightarrow B$ is bounded. The necessity of Theorem
\ref{3dl1}, together with $f_b (z)\in \mathcal{L}^{2,\lambda}_0(\D)$, proves $g\in H_{ \frac{\lambda-1}{2} }^\infty.$

Sufficiency: let $g\in H_{ \frac{\lambda-1}{2} }^\infty$. Then from Theorem \ref{3dl1}, $I_g:\mathcal{L}^{2,\lambda} \rightarrow B$ is bounded and hence $I_g:\mathcal{L}^{2,\lambda}_0 \rightarrow B$ is bounded. It suffices to prove that for any $f\in \mathcal{L}^{2,\lambda}_0$, $I_g f\in B_0$. In fact, for any $f\in \mathcal{L}^{2,\lambda}_0$, we have
\begin{eqnarray*}
&&\lim_{|a|\to 1} \int_{\D} |f'(z)|^2|g(z)|^2(1-|\varphi_a (z)|^2)^{2-\lambda} dm(z)\\
&=&\lim_{|a|\to 1} (1-|a|^2)^{1-\lambda} \int_{\D} |f'(z)|^2(1-|\varphi_a (z)|^2)|g(z)|^2 (\frac{1-|z|^2}{|1-\overline{a}z|^2})^{1-\lambda}dm(z) \\
&\leq & \|g\|_\frac{\lambda-1}{2} \cdot  \lim_{|a|\to 1}(1-|a|^2)^{1-\lambda}\int_{\D} |f'(z)|^2(1-|\varphi_a (z)|^2)dm(z) =0.
\end{eqnarray*}
Consequently, $I_g:\mathcal{L}^{2,\lambda}_0 \rightarrow B_0$ is bounded.
\end{proof}

\begin{theorem}
Suppose that $0<\lambda<1$ and $g\in H(\D)$. If $I_g:\mathcal{L}^{2,\lambda}_0 \rightarrow B_0$ is bounded, then$$
\|I_g\|_{e, \mathcal{L}^{2,\lambda}_0 \to B_0} \thickapprox \|g\|_\frac{\lambda-1}{2}.$$
\end{theorem}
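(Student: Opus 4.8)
The plan is to mirror the argument of Theorem \ref{dl3}, using that the test functions of Lemma \ref{314} already belong to the little spaces and that the $B_0$-norm is merely the restriction of the $B$-norm; throughout, $\|I_g\|$ and $\|I_g\|_{e,\mathcal{L}^{2,\lambda}_0\to B_0}$ denote the operator and essential norms of $I_g:\mathcal{L}^{2,\lambda}_0\to B_0$, and by hypothesis this operator is bounded, so $g\in H^\infty_{\frac{\lambda-1}{2}}$ by Theorem \ref{dl5}. For the upper estimate I would compare $I_g$ with the zero operator $O:\mathcal{L}^{2,\lambda}_0\to B_0$, which is compact with $\|O\|=0$, whence
$$\|I_g\|_{e,\mathcal{L}^{2,\lambda}_0\to B_0}=\inf_K\|I_g-K\|\leq \|I_g\|\thickapprox \|g\|_{\frac{\lambda-1}{2}},$$
the final comparison being the norm identity of Theorem \ref{dl5}.

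For the lower estimate I would fix any sequence $\{b_n\}\subset\D$ with $|b_n|\to 1$ and reuse $f_n(z)=(1-|b_n|^2)^{\frac{1-\lambda}{2}}(\rho_{b_n}(z)-b_n)$. By Lemma \ref{314} these satisfy $\|f_n\|_{\mathcal{L}^{2,\lambda}}\lesssim 1$ and, crucially, $f_n\in\mathcal{L}^{2,\lambda}_0(\D)$, so they are admissible in the little-space setting, while the primitive representation of $f_n$ shows $f_n\to 0$ uniformly on compact subsets of $\D$. For an arbitrary compact $K:\mathcal{L}^{2,\lambda}_0\to B_0$ the boundedness of $\{f_n\}$ together with its uniform convergence to zero on compacta forces $\|Kf_n\|_{B_0}\to 0$, so, since the $B_0$-norm equals the $B$-norm, the triangle inequality gives
$$\|I_g-K\|\gtrsim \limsup_{n\to\infty}\|(I_g-K)f_n\|_{B_0}\geq \limsup_{n\to\infty}\|I_g f_n\|_{B}.$$
Invoking the chain of estimates $(\ref{31})$, which holds verbatim and yields $\|I_g f_b\|_B\gtrsim |g(b)|/(1-|b|^2)^{\frac{1-\lambda}{2}}$, I get $\|I_g-K\|\gtrsim \limsup_{n}|g(b_n)|/(1-|b_n|^2)^{\frac{1-\lambda}{2}}$; taking the infimum over $K$ and letting $\{b_n\}$ range over all sequences tending to $\partial\D$ gives $\|I_g\|_{e,\mathcal{L}^{2,\lambda}_0\to B_0}\gtrsim \|g\|_{\frac{\lambda-1}{2}}$, which with the upper bound finishes the proof.

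The step I expect to demand the most care is verifying $\|Kf_n\|_{B_0}\to 0$ for an arbitrary compact operator \emph{into} $B_0$: this is where the little-space structure genuinely enters, and it relies on the continuity of point evaluations on $B_0$ together with the standard fact that a compact operator maps a bounded sequence converging to zero uniformly on compacta to a norm-null sequence, exactly as in the compactness criterion recorded earlier in this section. Everything else transfers from the $\mathcal{L}^{2,\lambda}\to B$ case without modification, precisely because Lemma \ref{314} already places the required test functions in $\mathcal{L}^{2,\lambda}_0$.
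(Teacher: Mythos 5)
Your proposal is correct and takes essentially the same route as the paper: the paper's own proof of this theorem merely observes that $I_g$ maps $\mathcal{L}^{2,\lambda}_0$ into $B_0$ (from Theorem \ref{dl5}), that the test functions $f_n$ of Lemma \ref{314} already lie in $\mathcal{L}^{2,\lambda}_0$, and then repeats the argument of Theorem \ref{dl3} verbatim --- which is precisely what you have written out (zero operator for the upper bound, the sequence $f_n$ together with the chain of estimates $(\ref{31})$ for the lower bound). The one step you flag as delicate, namely $\|Kf_n\|_{B_0}\to 0$ for an arbitrary compact $K$, is asserted without further justification in the paper's proof of Theorem \ref{dl3} as well, so your treatment is at least as careful as the original.
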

\begin{proof} As a matter of fact, if $g\in H_{ \frac{\lambda-1}{2} }^\infty$, then for any $f\in \mathcal{L}^{2,\lambda}_0$, $I_g f\in B_0$, Since  $f_n(z) \in \mathcal{L}^{2,\lambda}_0$ (see Theorem \ref{dl3}), we complete the proof  as the same as in the proof of Theorem \ref{dl3} .
\end{proof}

\begin{theorem}
\label{dl7}
Suppose that $0<\lambda<1$ and $g\in H(\D)$. Then the following conditions are
equivalent.

$(i)$ $T_g:\mathcal{L}^{2,\lambda}_0 \rightarrow B_0$ is bounded;

$(ii)$ $g\in B\mathcal{L}^{p,\lambda}_0$ for all $p\in (1,\infty)$;

$(iii)$ $g\in B\mathcal{L}^{p,\lambda}_0$ for some $p\in (1,\infty)$.

Moreover, $$
\|T_g\| \thickapprox M(g).$$
\end{theorem}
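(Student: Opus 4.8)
The plan is to reduce the whole statement to a single pointwise growth condition on $g'$. Paralleling the dyadic decomposition already used in the proof of Theorem \ref{dl2}, I would first record the equivalences
$$g\in B\mathcal{L}^{p,\lambda}\iff \sup_{z\in\D}(1-|z|^2)^{\frac{1+\lambda}{2}}|g'(z)|<\infty,\qquad g\in B\mathcal{L}^{p,\lambda}_0\iff \lim_{|z|\to1}(1-|z|^2)^{\frac{1+\lambda}{2}}|g'(z)|=0.$$
The forward implications follow from the sub-mean-value property of $|g'|^2$ over a pseudohyperbolic disc sitting inside a Carleson box $S(I)$ with $|I|\approx 1-|z|$, and the reverse implications from the elementary estimate $\int_{S(I)}(1-|z|^2)^{p-1-\lambda}\,dm(z)\approx |I|^{p-\lambda+1}$, which is valid precisely because $p>1>\lambda$. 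Since the right-hand conditions are manifestly independent of $p$, this instantly yields $(ii)\Leftrightarrow(iii)$ (and re-proves Corollary \ref{tl1} in its little form), so it will remain only to prove $(i)\Leftrightarrow(iii)$ for one fixed $p$ and to track constants for the norm estimate. Note $M(g)\approx \sup_z(1-|z|^2)^{(1+\lambda)/2}|g'(z)|$ throughout.

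For $(iii)\Rightarrow(i)$: since $B\mathcal{L}^{p,\lambda}_0\subset B\mathcal{L}^{p,\lambda}$, Theorem \ref{dl2} already gives that $T_g:\mathcal{L}^{2,\lambda}\to B$ is bounded with $\|T_g\|\lesssim M(g)$, hence so is $T_g:\mathcal{L}^{2,\lambda}_0\to B$. It then suffices to verify $T_gf\in B_0$ for every $f\in\mathcal{L}^{2,\lambda}_0$. Combining the growth estimate of Lemma \ref{mbd}, $|f(z)|\lesssim \|f\|_{\mathcal{L}^{2,\lambda}}(1-|z|^2)^{-\frac{1-\lambda}{2}}$, with the little-$o$ bound $(1-|z|^2)^{\frac{1+\lambda}{2}}|g'(z)|\to0$ gives
$$(1-|z|^2)|f(z)||g'(z)|\lesssim \|f\|_{\mathcal{L}^{2,\lambda}}\,(1-|z|^2)^{\frac{1+\lambda}{2}}|g'(z)|\longrightarrow0\quad(|z|\to1),$$
and since $(T_gf)'=fg'$ this says exactly $T_gf\in B_0$.

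The direction $(i)\Rightarrow(iii)$ is where the real work lies. The finiteness $M(g)\lesssim\|T_g\|$ comes cleanly from testing on the functions $F_b\in\mathcal{L}^{2,\lambda}_0$ of Lemma \ref{314}: since $\|T_gF_b\|_B\lesssim\|T_g\|$ and $(T_gF_b)'(b)=F_b(b)g'(b)$ with $F_b(b)=(1-|b|^2)^{\frac{\lambda-1}{2}}$, evaluating the Bloch seminorm at $z=b$ returns $(1-|b|^2)^{\frac{1+\lambda}{2}}|g'(b)|\lesssim\|T_g\|$, i.e. $g\in B\mathcal{L}^{p,\lambda}$. The main obstacle is the \emph{vanishing}. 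I would argue by contradiction: if the little-$o$ condition failed there would be $b_n\to\partial\D$ with $(1-|b_n|^2)^{\frac{1+\lambda}{2}}|g'(b_n)|\ge c>0$; passing to a sufficiently lacunary subsequence, I would assemble a single test function $f=\sum_k a_k F_{b_{n_k}}$ that still lies in $\mathcal{L}^{2,\lambda}_0$ with $\|f\|_{\mathcal{L}^{2,\lambda}}\lesssim1$ and is comparable to $F_{b_{n_k}}$ near $b_{n_k}$, so that $(1-|b_{n_k}|^2)|f(b_{n_k})||g'(b_{n_k})|\gtrsim c$; this forces $T_gf\notin B_0$, contradicting $(i)$. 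The genuinely nontrivial step is this assembly — choosing the points separated enough and the coefficients $a_k$ so that the sum defines an element of the little space with controlled norm while preserving the lower bound at each $b_{n_k}$ — a standard atomic/separation argument but the one place care is required. An alternative that bypasses the construction is to observe that $(i)$ upgrades boundedness into the little target $B_0$ to compactness of $T_g:\mathcal{L}^{2,\lambda}\to B$, whence the essential-norm identity of Theorem \ref{dl4} (whose $\limsup$ must then vanish) delivers $g\in B\mathcal{L}^{p,\lambda}_0$ directly.

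Finally, the two-sided estimates $M(g)\lesssim\|T_g\|\lesssim M(g)$ collected along the way give $\|T_g\|\approx M(g)$, exactly as in Theorem \ref{dl2}, and the argument runs in close parallel to the little-space treatment of $I_g$ in Theorem \ref{dl5}.
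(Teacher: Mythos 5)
Your positive steps are correct, and in places cleaner than the paper's own argument: the pointwise identification of $B\mathcal{L}^{p,\lambda}$ with $\{g:\sup_{z\in\D}(1-|z|^2)^{\frac{1+\lambda}{2}}|g'(z)|<\infty\}$ (and its little-oh analogue) holds for every $p>1$ by exactly the sub-mean-value and $\int_{S(I)}(1-|z|^2)^{p-1-\lambda}dm(z)\thickapprox|I|^{p-\lambda+1}$ arguments you cite; it gives $(ii)\Leftrightarrow(iii)$ at once and reduces $(iii)\Rightarrow(i)$ to one line via Lemma \ref{mbd}, whereas the paper reruns the dyadic decomposition of $S(I)$ from Theorem \ref{dl2} in little-oh form. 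Your big-oh half of the necessity, $(1-|b|^2)|(T_gF_b)'(b)|=(1-|b|^2)^{\frac{1+\lambda}{2}}|g'(b)|\lesssim\|T_g\|$, is also correct and is the pointwise version of the paper's $F_b$-testing. The genuine gap is the vanishing half of $(i)\Rightarrow(ii)$, and neither of your two routes closes it.

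Route B rests on a false principle: boundedness of an operator into a little space does not imply compactness (the identity on $B_0$ is bounded and non-compact), and in this setting the assertion ``$(i)$ forces $T_g:\mathcal{L}^{2,\lambda}\to B$ to be compact'' is, through Theorem \ref{dl4}, exactly equivalent to the implication $(i)\Rightarrow(ii)$ you are trying to prove; invoking it is circular. Route A fails for a deeper reason: the test function you want to assemble cannot exist. Functions in $\mathcal{L}^{2,\lambda}_0$ obey the little-oh growth estimate $|f(z)|=o\big((1-|z|^2)^{-\frac{1-\lambda}{2}}\big)$ as $|z|\to 1$. Indeed, starting from $\lim_{|a|\to1}(1-|a|^2)^{1-\lambda}\int_{\D}|f'(z)|^2(1-|\varphi_a(z)|^2)\,dm(z)=0$ --- the very form of the little-space condition the paper uses in proving Theorem \ref{dl5} --- the sub-mean-value property of $|f'|^2$ on the disc $D(a,\frac{1-|a|}{2})$, where $1-|\varphi_a|^2\gtrsim 1$, gives $|f'(a)|=o\big((1-|a|^2)^{\frac{\lambda-3}{2}}\big)$ uniformly in direction, and radial integration then gives the claim; this is just the little-oh version of Lemma \ref{mbd}. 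Consequently no $f\in\mathcal{L}^{2,\lambda}_0$ can satisfy $|f(b_{n_k})|\gtrsim(1-|b_{n_k}|^2)^{-\frac{1-\lambda}{2}}$ along a sequence tending to $\partial\D$, however lacunary the points and however the coefficients $a_k$ are chosen.

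Moreover, this obstruction is structural and afflicts the paper's own proof of $(i)\Rightarrow(ii)$ at the same spot. The paper takes $b=b(I)=(1-|I|)\zeta_I$ and argues $F_b\in\mathcal{L}^{2,\lambda}_0$, hence $T_gF_b\in B_0$, hence $g\in B\mathcal{L}^{p,\lambda}_0$; but the last step needs $\frac{1}{|I|^{p}}\int_{S(I)}|F_{b(I)}|^2|g'|^2(1-|z|^2)^{p}dm(z)\to 0$ as $|I|\to0$ with $b(I)$ moving together with $I$, a uniformity that membership of each individual $T_gF_b$ in $B_0$ does not supply. In fact the little-oh growth estimate shows that $(1-|z|^2)|f(z)||g'(z)|\to0$ for every $f\in\mathcal{L}^{2,\lambda}_0$ as soon as $g$ lies in the big space $B\mathcal{L}^{p,\lambda}$, so $(i)$ holds for every such $g$; taking $g'(z)=(1-z)^{-\frac{1+\lambda}{2}}$, which lies in $B\mathcal{L}^{p,\lambda}\setminus B\mathcal{L}^{p,\lambda}_0$, one gets $(i)$ without $(ii)$. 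So the step you flagged as ``the genuinely nontrivial step'' is not merely nontrivial: it cannot be completed, and the correct conclusion along your (sound) pointwise framework is that $(i)$ characterizes $g\in B\mathcal{L}^{p,\lambda}$, not $g\in B\mathcal{L}^{p,\lambda}_0$.
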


\begin{proof}

$(i)\Rightarrow (ii)$.  Suppose that $T_g$ is bounded. For any $I \subset \partial\mathbbm{D}$, let $b=(1-|I|)\zeta$, where $\zeta$ is the centre of $I$. Then
$$(1-|b|^2)\approx |1-\overline{b}z| \approx |I|, ~~z\in S(I).$$
Concerning the functions $F_b (z)$ in Lemma \ref{314}, we have $F_b (z)\in \mathcal{L}^{2,\lambda}_0(\D)$, thus $T_g(F_b (z))\in B_0$, Furthermore, from Proposition \ref{mt2} it yields $g\in B\mathcal{L}^{p,\lambda}_0$ .

$(ii)\Rightarrow (iii)$. It is obvious.

$(iii)\Rightarrow (i)$. Let fix $p\in (1,\infty)$ and $g\in B\mathcal{L}^{p,\lambda}_0$. Then from Theorem $\ref{dl2}$, $T_g:\mathcal{L}^{2,\lambda} \rightarrow B$ is bounded and hence $T_g:\mathcal{L}^{2,\lambda}_0 \rightarrow B$ is bounded. It suffices to prove that for any $f\in \mathcal{L}^{2,\lambda}_0$, $T_g f\in B_0$. Indeed, $g\in B\mathcal{L}^{p,\lambda}_0$, for every $\varepsilon >0$ there is an integer $\delta>0$ such that as $|J|<\delta$, $$\frac{1}{|J|^{p-\lambda+1}} \int_{S(J)} |g'(z)|^2(1-|z|^2)^{p}dm(z)<\varepsilon.$$
With the above $\delta$, for any $|I|$ satisfying $|I|<\delta$, we break up $S(I)$  in the same way in Theorem $\ref{dl2}$, then by $(\ref{32})$
\begin{eqnarray*}
\int_{S(I)} |g'(z)|^2(1-|z|^2)^{p+\lambda-1}dm(z)
&\lesssim &\sum_{n=0}^\infty\sum_{J\in \mathcal{D}_n(I)}\int_{S(J)} |J|^{\lambda-1}|g'(z)|^2(1-|z|^2)^{p}dm(z)\\
&\leq& \sum_{n=0}^\infty\sum_{J\in \mathcal{D}_n(I)}\varepsilon |J|^{\lambda-1}|J|^{p-\lambda+1}\\
&\lesssim& \varepsilon|I|^p.
\end{eqnarray*}
namely, $$\lim_{|I| \rightarrow 0} \frac{1}{|I|^p} \int_{S(I)} |g'(z)|^2(1-|z|^2)^{p+\lambda-1}dm(z)=0.$$
Now, it is easy to see that
\begin{eqnarray*}
\lefteqn{   \lim_{|I| \rightarrow 0} \big(\frac{1}{|I|^p} \int_{S(I)}|f(z)|^2 |g'(z)|^2(1-|z|^2)^{p}dm(z)\big)^{1/2}   } \hspace{30pt}\\
&\lesssim& \|f\|_{\mathcal{L}^{2,\lambda}}  \cdot \lim_{|I| \rightarrow 0} \big(\frac{1}{|I|^p} \int_{S(I)} |g'(z)|^2(1-|z|^2)^{p+\lambda-1}dm(z)\big)^{1/2}=0.
\end{eqnarray*}

In conclusion, $T_g:\mathcal{L}^{2,\lambda}_0 \rightarrow B_0$ is bounded.
\end{proof}

\begin{lemma}
\label{yl7}
 Suppose that $0<\lambda<1$, $1<p$ and $g\in B\mathcal{L}^{p,\lambda}_0$, the operator $T_{g,r}:\mathcal{L}^{2,\lambda}_0 \rightarrow B_0$ satisfies $$
T_{g,r} f(z)=\int_0^z f(rw)g'(w)dw.
$$ where $r\in (0,1)$. Then $T_{g,r}:\mathcal{L}^{2,\lambda}_0 \rightarrow B_0$ is compact.
\end{lemma}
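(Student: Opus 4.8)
The plan is to reduce the statement to Lemma \ref{yl6} together with the observation that $B_0$ is a closed subspace of $B$. Concretely, I would establish two facts: first, that $T_{g,r}$ really carries $\mathcal{L}^{2,\lambda}_0$ into $B_0$; and second, that $T_{g,r}$ is already compact as an operator into the big space $B$, so that closedness of $B_0$ upgrades this to compactness into $B_0$.

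First I would record the mapping property. For $f\in\mathcal{L}^{2,\lambda}_0$ one has $(T_{g,r}f)'(z)=f(rz)g'(z)$, and since $r\in(0,1)$ is fixed, $|f(rz)|\le \sup_{|w|\le r}|f(w)|<\infty$ for all $z\in\D$. Hence $(1-|z|^2)|(T_{g,r}f)'(z)|\lesssim (1-|z|^2)|g'(z)|$, and it suffices to show $g\in B_0$. This is where the hypothesis $g\in B\mathcal{L}^{p,\lambda}_0$ is used: by a sub-mean-value estimate for the analytic function $g'$ over a pseudohyperbolic disc $D(z,1/2)$, which is contained in a Carleson box $S(I)$ with $|I|\approx 1-|z|^2$ and on which $1-|w|^2\approx 1-|z|^2$, one obtains
$$\big[(1-|z|^2)|g'(z)|\big]^2 \lesssim (1-|z|^2)^{1-\lambda}\cdot \frac{1}{|I|^{p-\lambda+1}}\int_{S(I)}|g'(w)|^2(1-|w|^2)^p\,dm(w).$$
The last factor tends to $0$ as $|I|\to 0$ because $g\in B\mathcal{L}^{p,\lambda}_0$, and $(1-|z|^2)^{1-\lambda}\to 0$ as well; thus $(1-|z|^2)|g'(z)|\to 0$, i.e. $g\in B_0$. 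Therefore $T_{g,r}f\in B_0$ for every $f\in\mathcal{L}^{2,\lambda}_0$.

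For compactness, since $g\in B\mathcal{L}^{p,\lambda}_0\subseteq B\mathcal{L}^{p,\lambda}$, Lemma \ref{yl6} applies and gives that $T_{g,r}:\mathcal{L}^{2,\lambda}\to B$ is compact; restricting to the subspace $\mathcal{L}^{2,\lambda}_0$ preserves compactness, so $T_{g,r}:\mathcal{L}^{2,\lambda}_0\to B$ is compact. Now I would take any bounded $\{f_n\}\subset\mathcal{L}^{2,\lambda}_0$: by compactness into $B$ there is a subsequence with $T_{g,r}f_{n_k}\to h$ in $B$, and since each $T_{g,r}f_{n_k}\in B_0$ with $B_0$ closed in $B$, the limit $h$ lies in $B_0$, where the $B$-norm and $B_0$-norm coincide. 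Hence $T_{g,r}f_{n_k}\to h$ in $B_0$, proving $T_{g,r}:\mathcal{L}^{2,\lambda}_0\to B_0$ is compact.

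The routine steps parallel Lemma \ref{yl6}; the only genuine point requiring care will be the mapping-into-$B_0$ claim, i.e. the passage from the integral (Morrey-type) smallness in the definition of $B\mathcal{L}^{p,\lambda}_0$ to the pointwise boundary decay $(1-|z|^2)|g'(z)|\to 0$. This uses the standard sub-mean-value inequality for $g'$ and the comparison $1-|w|^2\approx 1-|z|^2$ on pseudohyperbolic discs; everything else is a direct consequence of Lemma \ref{yl6} and the closedness of $B_0$ in $B$.
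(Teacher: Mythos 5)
Your proof is correct, and its compactness skeleton is exactly the paper's: cite Lemma \ref{yl6} for compactness of $T_{g,r}:\mathcal{L}^{2,\lambda}\to B$, restrict to the subspace, and upgrade to compactness into $B_0$ using that $B_0$ is closed in $B$ with the same norm. Where you genuinely diverge is the range step $T_{g,r}(\mathcal{L}^{2,\lambda}_0)\subset B_0$. The paper obtains it structurally: it writes $T_{g,r}f=T_g(f_r)$ with $f_r(z)=f(rz)$, invokes Lemma \ref{yl5} to place $f_r$ in $\mathcal{L}^{2,\lambda}_0$, and then cites the fact established inside Theorem \ref{dl7} ((iii)$\Rightarrow$(i), via the dyadic Carleson-box decomposition) that $T_g$ maps $\mathcal{L}^{2,\lambda}_0$ into $B_0$ whenever $g\in B\mathcal{L}^{p,\lambda}_0$. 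You instead argue pointwise: $|f(rz)|$ is bounded for fixed $r<1$, and the sub-mean-value estimate of $|g'|^2$ over a pseudohyperbolic disc, which you execute correctly, gives $\bigl[(1-|z|^2)|g'(z)|\bigr]^2\lesssim (1-|z|^2)^{1-\lambda}M(g)^2$, hence $g\in B_0$. Your route buys something the paper's does not make explicit: since $(1-|z|^2)^{1-\lambda}\to 0$ while the Morrey factor is merely bounded by $M(g)^2$, the little-space hypothesis on $g$ is not actually needed for this step---every $g\in B\mathcal{L}^{p,\lambda}$ already lies in $B_0$, with the quantitative rate $(1-|z|^2)|g'(z)|\lesssim (1-|z|^2)^{(1-\lambda)/2}M(g)$---and consequently $T_{g,r}$ maps all of $\mathcal{L}^{2,\lambda}$, not only $\mathcal{L}^{2,\lambda}_0$, into $B_0$. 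The paper's proof is shorter because it reuses Theorem \ref{dl7} and Lemma \ref{yl5}; yours is self-contained and independent of both.
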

\begin{proof}Since $g\in B\mathcal{L}^{p,\lambda}_0$, it follows from Lemma $\ref{yl6}$ that $T_{g,r}:\mathcal{L}^{2,\lambda} \rightarrow B$ is compact and hence $T_{g,r}:\mathcal{L}^{2,\lambda}_0 \rightarrow B$ is compact. As a matter of fact in Theorem $\ref{dl7}$, if $g\in B\mathcal{L}^{p,\lambda}_0$
, then for any $f\in \mathcal{L}^{2,\lambda}_0$, $T_g f\in B_0$, together with Lemma $\ref{yl5}$ , we conclude $f\in \mathcal{L}^{2,\lambda}_0$, $T_{g,r} f\in B_0$, so that $T_{g,r}:\mathcal{L}^{2,\lambda}_0 \rightarrow B_0$ is compact.
\end{proof}

\begin{theorem}
Let $0<\lambda<1$ and $g\in H(\D)$. If $T_g:\mathcal{L}^{2,\lambda}_0 \rightarrow B_0$ is bounded, then $$
\|T_g\|_{e, \mathcal{L}^{2,\lambda}_0\to B_0} \thickapprox \limsup_{|a|\rightarrow 1} \big( \int_{\D}(\frac{1-|a|^2}{|1-\overline{a}z|^2})^{p+1-\lambda} |g'(z)|^2(1-|z|^2)^{p}dm(z)\big)^{1/2}.$$
\end{theorem}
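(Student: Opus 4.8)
The plan is to mirror the proof of Theorem \ref{dl4}, replacing $\mathcal{L}^{2,\lambda}$ and $B$ by their little-oh counterparts $\mathcal{L}^{2,\lambda}_0$ and $B_0$ and invoking the little-space versions of the auxiliary results. Note first that boundedness of $T_g:\mathcal{L}^{2,\lambda}_0\to B_0$ forces $g\in B\mathcal{L}^{p,\lambda}_0$ by Theorem \ref{dl7}, so all the little-space machinery is available. Since the norm of $B_0$ is just the restriction of the norm of $B$, Proposition \ref{mt2} and the comparison from \cite{radsjx} apply verbatim, and the two directions of the estimate are handled separately.

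For the upper bound I would use the operators $T_{g,r_n}:\mathcal{L}^{2,\lambda}_0\to B_0$ with $r_n\to 1$, which are compact by Lemma \ref{yl7}. Then $\|T_g\|_{e}\leq\|T_g-T_{g,r_n}\|$, and writing this operator norm through Proposition \ref{mt2} gives a supremum over $f$ in the unit ball of $\mathcal{L}^{2,\lambda}_0$ and over $a\in\D$ of $\big(\int_{\D}|f(z)-f(r_nz)|^2|g'(z)|^2(1-|\varphi_a(z)|^2)^{p+1-\lambda}dm(z)\big)^{1/2}$. Splitting the supremum over $a$ into $|a|\leq s$ and $|a|>s$, the first piece tends to $0$ as $n\to\infty$ by the Dominated Convergence Theorem (the region $|a|\leq s$ being a compact subset of $\D$ and $g\in B\mathcal{L}^{p,\lambda}_0\subset B\mathcal{L}^{p,\lambda}$), while the second is bounded by $2\sup_{|a|>s}\big(\int_{\D}(\frac{1-|a|^2}{|1-\overline{a}z|^2})^{p+1-\lambda}|g'(z)|^2(1-|z|^2)^{p}dm(z)\big)^{1/2}$. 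Letting $n\to\infty$ and then $s\to 1$ yields the $\lesssim$ direction, exactly as in Theorem \ref{dl4}.

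For the lower bound I would fix arcs $I_n\subset\partial\D$ with $|I_n|\to 0$, set $b_n=(1-|I_n|)\zeta_n$ with $\zeta_n$ the center of $I_n$, and test against $F_n(z)=(1-|b_n|^2)(1-\overline{b_n}z)^{(\lambda-3)/2}$. By Lemma \ref{314} these lie in $\mathcal{L}^{2,\lambda}_0$ with $\|F_n\|_{\mathcal{L}^{2,\lambda}}\lesssim 1$ and converge to $0$ uniformly on compact subsets of $\D$; hence $\|KF_n\|_{B_0}\to 0$ for every compact $K:\mathcal{L}^{2,\lambda}_0\to B_0$, so that $\|T_g-K\|\gtrsim\limsup_n\|T_gF_n\|_{B_0}$. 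Bounding $\|T_gF_n\|_{B_0}$ below over the Carleson box $S(I_n)$ and using $(1-|b_n|^2)\approx|1-\overline{b_n}z|\approx|I_n|$ on $S(I_n)$ gives $\|T_gF_n\|_{B_0}\gtrsim\big(\frac{1}{|I_n|^{p+1-\lambda}}\int_{S(I_n)}|g'(z)|^2(1-|z|^2)^{p}dm(z)\big)^{1/2}$; since the arcs are arbitrary this produces $\|T_g\|_{e}\gtrsim\limsup_{|I|\to 0}(\cdots)^{1/2}$, which the comparison of \cite{radsjx} identifies with the stated limsup as $|a|\to 1$.

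The only genuinely new content beyond Theorem \ref{dl4} is the bookkeeping that every object lives in the little spaces: the test functions $F_n$ must belong to $\mathcal{L}^{2,\lambda}_0$ (Lemma \ref{314}) and the approximants $T_{g,r_n}$ must be compact as maps $\mathcal{L}^{2,\lambda}_0\to B_0$ (Lemma \ref{yl7}, whose hypothesis $g\in B\mathcal{L}^{p,\lambda}_0$ is furnished by Theorem \ref{dl7}). I expect the main, though routine, obstacle to be the justification of $\|KF_n\|_{B_0}\to 0$, which relies on the standard fact that a compact operator carries a bounded sequence tending to $0$ uniformly on compacta to a norm-null sequence; this is argued exactly as in the proofs of Theorems \ref{dl3} and \ref{dl4}. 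All remaining analytic estimates are identical to the $\mathcal{L}^{2,\lambda}\to B$ case.
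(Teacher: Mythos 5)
Your proposal is correct and follows essentially the same route as the paper: the paper's own proof is precisely a pointer back to Theorem \ref{dl4}, combined with the little-space bookkeeping you identify (the fact that $T_g f\in B_0$ for $f\in\mathcal{L}^{2,\lambda}_0$, that $F_n\in\mathcal{L}^{2,\lambda}_0$ by Lemma \ref{314}, and that $T_{g,r}:\mathcal{L}^{2,\lambda}_0\to B_0$ is compact by Lemma \ref{yl7}). Your write-up actually carries out the two-sided estimate in more detail than the paper does, but the decomposition, test functions, and compact approximants are identical.
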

\begin{proof}
 Based on the fact that if $g\in B\mathcal{L}^{p,\lambda}_0$, then for any $f\in \mathcal{L}^{2,\lambda}_0$, $T_g f\in B_0$, together with $F_n(z) \in \mathcal{L}^{2,\lambda}_0$ and the compact operator $T_{g,r}$ in Lemma $\ref{yl7}$, similar to the proof of Theorem \ref{dl4}, we obtain the desired result.

\end{proof}
The following corollary is an immediate consequence of the above theorem.
\begin{corollary}
Let $0<\lambda<1$ and $p>1$. Then

$(i)$$I_g:\mathcal{L}^{2,\lambda}_0 \rightarrow B_0$ is compact if and only if $g=0$.

$(ii)$$T_g:\mathcal{L}^{2,\lambda}_0 \rightarrow B_0$ is compact if and only if $T_g:\mathcal{L}^{2,\lambda}_0 \rightarrow B_0$ is bounded if and only if $g\in B\mathcal{L}^{p,\lambda}_0$.
\end{corollary}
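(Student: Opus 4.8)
The plan is to read both parts straight off the essential-norm estimates established immediately above, using the standard fact that a bounded operator between Banach spaces is compact precisely when its essential norm vanishes.

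For part (i), I would start from the essential-norm formula $\|I_g\|_{e,\mathcal{L}^{2,\lambda}_0\to B_0}\thickapprox\|g\|_{\frac{\lambda-1}{2}}$ proved in the preceding theorem. Hence $I_g$ is compact if and only if $\|g\|_{\frac{\lambda-1}{2}}=0$. Since $\frac{\lambda-1}{2}<0$ for $0<\lambda<1$, the weight $(1-|z|^2)^{\frac{\lambda-1}{2}}$ is strictly positive (indeed $\geq 1$) throughout $\D$, so $\sup_{z\in\D}(1-|z|^2)^{\frac{\lambda-1}{2}}|g(z)|=0$ forces $g\equiv 0$. The converse is immediate because $I_0$ is the zero operator, which is compact. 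This settles (i) with essentially no computation.

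For part (ii), the equivalence ``$T_g$ bounded $\Leftrightarrow g\in B\mathcal{L}^{p,\lambda}_0$'' is exactly Theorem \ref{dl7}, and ``compact $\Rightarrow$ bounded'' is automatic. The only implication needing an argument is ``bounded $\Rightarrow$ compact.'' Here I would invoke the essential-norm estimate for $T_g:\mathcal{L}^{2,\lambda}_0\to B_0$ from the preceding theorem, namely
\[
\|T_g\|_{e,\mathcal{L}^{2,\lambda}_0\to B_0}\thickapprox\limsup_{|a|\to 1}\Big(\int_{\D}\Big(\tfrac{1-|a|^2}{|1-\overline{a}z|^2}\Big)^{p+1-\lambda}|g'(z)|^2(1-|z|^2)^{p}\,dm(z)\Big)^{1/2}.
\]
Boundedness of $T_g$ gives $g\in B\mathcal{L}^{p,\lambda}_0$ by Theorem \ref{dl7}, so it suffices to check that membership in the little space makes this limsup vanish, whence $\|T_g\|_e=0$ and $T_g$ is compact.

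The main obstacle is precisely this last point: the little-space condition is phrased in the Carleson-box form $\lim_{|I|\to 0}|I|^{-(p-\lambda+1)}\int_{S(I)}|g'(z)|^2(1-|z|^2)^{p}\,dm(z)=0$, whereas the essential norm is phrased in the Berezin-type form with $|a|\to 1$. I would bridge the two by promoting the norm comparison from \cite{radsjx} (quoted after Definition 1.1 only at the level of suprema) to the little-oh level. Taking $b=(1-|I|)\zeta$ with $\zeta$ the center of $I$ and using $(1-|b|^2)\approx|1-\overline{b}z|\approx|I|$ on $S(I)$ shows the box quantity over $S(I)$ is comparable to the $a$-integral evaluated at $a=b$, giving one direction of the little-oh equivalence; the reverse direction follows from the dyadic decomposition of $S(I)$ already used in Theorem \ref{dl2}. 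Once this equivalence is in hand the limsup is zero, the essential norm vanishes, $T_g$ is compact, and both parts of the corollary follow at once.
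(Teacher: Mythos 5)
Your overall route is exactly the paper's: the authors state this corollary as an immediate consequence of the two essential-norm theorems of Section 5, and both parts do read off those formulas as you describe --- part (i) from $\|I_g\|_{e,\mathcal{L}^{2,\lambda}_0\to B_0}\thickapprox\|g\|_{\frac{\lambda-1}{2}}$ (your remark that the weight $(1-|z|^2)^{\frac{\lambda-1}{2}}\geq 1$ makes ``essential norm zero $\Rightarrow g\equiv 0$'' immediate is a clean way to finish), and part (ii) from the essential-norm formula for $T_g$ combined with Theorem \ref{dl7} and the triviality ``compact $\Rightarrow$ bounded.''

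The step you correctly single out as the main obstacle --- passing from the vanishing Carleson-box condition defining $B\mathcal{L}^{p,\lambda}_0$ to the vanishing of the Berezin-type limsup --- is, however, not justified by the tool you point to. The dyadic decomposition in Theorem \ref{dl2} subdivides a \emph{fixed} box $S(I)$ downward into top halves $T(J)$ and was used there only to trade the weight $(1-|z|^2)^{p}$ for $(1-|z|^2)^{p+\lambda-1}$; it says nothing about an integral over all of $\D$ against the M\"obius kernel. What is actually needed is the opposite, expanding decomposition around $a$: let $I_a$ be the arc of length $1-|a|$ centered at $a/|a|$ and write $\D$ as the union of $S(I_a)$ and the pieces $S(2^{n+1}I_a)\setminus S(2^{n}I_a)$. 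On the $n$-th piece one has $|1-\overline{a}z|\gtrsim 2^{n}(1-|a|)$, so its contribution is bounded by a constant times $2^{-n(p+1-\lambda)}$ times the normalized box quantity of $S(2^{n+1}I_a)$; then split the resulting sum at the scale where $2^{n}(1-|a|)\approx\delta$, using the vanishing hypothesis on the small boxes and the geometric factor $\big(\frac{1-|a|}{\delta}\big)^{p+1-\lambda}\to 0$ against the global bound $M(g)^2$ on the large ones. (Equivalently, you may simply invoke the vanishing Carleson-measure equivalence in \cite{radsjx}, the little-oh companion of the norm comparison the paper already quotes after Definition 1.1.) With that substitution your argument is complete and coincides with the proof the paper intends; as written, the pointer to Theorem \ref{dl2} is the one genuine gap.
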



\begin{thebibliography}{123}
\bibitem{aaags}
A. Aleman and A.G. Siskakis, Integration operators on Bergman spaces, Indiana Univ. Math. J. 46 (1997) 337-356.
\bibitem{aajac}
A. Aleman and J.A. Cima, An integral operator on $H^p$ and Hardy's inequality, J. Anal. Math.85 (2001) 157-176.
\bibitem{npaom}
D. Adams and J. Xiao, Nonlinear potential analysis on Morrey spaces and their capacities,
Indiana Univ. Math. J. 53 (2004) 1629-1663.

\bibitem{miha}
D. Adams and J. Xiao, Morrey spaces in harmonic analysis, Ark. Mat.50 (2012) 201-230.


\bibitem{bf}
J.M. Anderson, Bloch functions: The basic theory, Operators and Funciton Theory, Math. Phys. Sci.153 (1985) 1-17.
\bibitem{obf}
J.M. Anderson, J. Clunie and Ch. Pommerenke, On Bloch functions and normal functions, J. r eine Angew. Math.270 (1974) 12-37.

\bibitem{radsjx}
R. Aulaskari, D.A. Stegenga and J. Xiao, Some subclasses of BMOA and their characterizations in terms of Carleson measures, Rocky. Mt. J. Math. 26 (1996) 485-506.
\bibitem{arlp}
R. Aulaskari and P. Lappan, Criteria for an analytic function to be Bloch and a harmonic or meromorphic function to be normal, In: Complex Analysis and Its Applications, Pitman Research Notes in Mathematics,305 (1994) 136-146.



\bibitem{bbt}
K.D. Bierstedt and W.H.Summers, Biduals of weighted Banach spaces of analytic functions,  J. Austral. Math. Soc.(Series A), 54 (1993) 70-79.
\bibitem{kabjbag}
K.D. Bierstedt, J. Bonet and A. Galbis, Weighted spaces of holomorphic functions on bounded domains, Michigan Math. J. 40 (1993) 271-297.


\bibitem{coosoaf}
C. Cowen and D. MacCluer, Composition operators on spaces of analytic functions, Studies in Advanced Mathematics. CRC Press, Boca Raton, FL, (1995).
\bibitem{tctiwh}
C. Cascante, J. F\`{a}brega and J. M. Ortega, The Corona theorem in weighted Hardy and Morrey spaces, Ann. Sc. Norm. Super. Pisa Cl. Sci. (5) DOI 10.2422/2036-2145.201202 006.
\bibitem{oc}
O. Constantin, A Volterra-type integration operator on Fock spaces, Proc. Amer. Math. Soc. 140 (2012) 4247-4257.



\bibitem{dur}
P. Duren, Theory of $H^p$ Spaces, Academic Press, New York, (1970).
\bibitem{gir}
D. Girela, Analytic functions of bounded mean oscillation, Complex Function Spaces (Mekrijarvi, 1999), 61-170, Univ. Joensuu Dept. Rep. Ser., 4, Univ. Joensuu, Joensuu, 2001.

\bibitem{jlsmpn}
J. Laitila, S. Miihkinen and P. Nieminen, Essential norms and weak compactness of integration operators, Arch. Math. 97 (2011) 39-48.
\bibitem{jlzlcx}
J. Liu, Z. Lou and C. Xiong, Essential norms of integral operators on spaces of analytic functions, Nonlinear Anal. 75 (2012)  5145-5156.

\bibitem{pljlzl}
P.T. Li, J.M. Liu and Z.J. Lou, Integral operators on analytic Morrey spaces, Sci China Math. http://arxiv.org/abs/1304.2575


\bibitem{slss}
S. Li and S. Stevi\'{c}, Volterra-type operators on Zygmund spaces, J. Ineq. Appl.  Article ID 32124, (2007) 10 pages

\bibitem{cbmj}
C.B. Morrey Jr., On the solutions of quasi-linear elliptic partial differential equations, Trans. Amer. Math. Soc. 43 (1938), 126-166.
\bibitem{km}
K. Madigan and A. Matheson, Compact composition operators on the Bloch space. Tran. Amer. Math. Soc. 347 (1995) 2679-2687.




\bibitem{chp}
Ch. Pommerenke, Schlichte Funktionen und analytische Funktionen von beschr$\ddot{a}$nkter mittlerer Oszillation, Comment.Math. Helvetici. 52 (1977) 591-602.

\bibitem{jp}
J. Peetre, On the theory of  $\mathcal{L}_{p,\lambda} $ spaces, J. Funct. Anal. 4 (1964) 71-87.




\bibitem{agsrz}
A. Siskakis and R. Zhao, A Volterra type operator on spaces of analytic functions, Contemporary Mathematics. 232 (1999) 299-311.
\bibitem{sw1}
A.L. Shields and D.L. Williams, Bounded projections, duality, and multipliers in spaces of harmonic functions, J. Reine Angew. Math. 299 (1978)  256-279.
\bibitem{sw2}
A.L. Shields and D.L. Williams, Bounded projections and the growth of harmonic conjugates in the disk, Michigan Math. J.29 (1982) 3-25.
\bibitem{js}
J. Shapiro, The essential norm of a composition operator, Ann. Math. 125 (1987) 375-404.

\bibitem{zw}
Z. Wu,  A new characterization for Carleson measure and some applications, Integr. Equ. Oper. Theory, 71 (2011) 161-180.

\bibitem{hwjz}
Z. Wu and   C. Xie, Q spaces and Morrey spaces, J. Funct. Anal, 297 (2003) 282-297.



\bibitem{jxqcm}
J. Xiao, The $Q_p$ Carleson measure problem,  Adv. Math. 217 (2008) 2075-2088.

\bibitem{jx}
J. Xiao, Holomorphic Q classes. Lecture Notes in Mathematics, Springer-Verlag, Berlin. 1767 (2001).
\bibitem{xia1}
J. Xiao, Geometric Qp Functions, Frontiers in Mathematics. Birkh$\ddot{a}$auser Verlag, Basel, 2006.
\bibitem{cobacs}
J. Xiao and W. Xu, Composition operators between analytic Campanato spaces, J. Geom. Anal, Doi 10.1007/s12220-012-9349-6.
\bibitem{wul}
H. Wulan  and J. Zhou, QK and Morrey type spaces. Ann Acad Sci Fenn Math. 38 (2013) 193-207. 
\bibitem{povo}
S. Ye, Products of Volterra-type operators and composition operators on logarithmic Bloch space, WSEAS Trans. Math. 12 (2013) 180-188.
\bibitem{ysgj}
S. Ye and J. Gao, Extended Ces\'{a}ro Operators Between Diferent Weighted Bloch―type Spaces, Acta Math. Sci.
(Series A), 28 (2008) 349-358.(In Chinese).

\bibitem{ctz}
C.T. Zorko, Morrey space, Proc. Amer. Math. Soc. 98 (1986) 586-592.
\bibitem{otifs}
K. Zhu, Operator theorey in function spaces, Senond edition, Mathematical surveys and Monographs 138 (2007).

\end{thebibliography}
\end{document}